\documentclass[
a4paper,
eqnright, reqno]{amsart}
\usepackage{graphicx}
\usepackage{hyperref}
\usepackage{color}
\definecolor{black}{gray}{0}
\definecolor{ttgray}{gray}{0.5}
\definecolor{bfred}{rgb}{0.4,0,0}
\definecolor{emgreen}{rgb}{0,0.3,0}
\definecolor{emmagenta}{rgb}{0.6,0.07,0.07}
\definecolor{sfgb}{rgb}{0,0.3,0.3}
\definecolor{mathblue}{rgb}{0,0,0.4}
\addtolength{\parskip}{.5\baselineskip}
\setlength{\parindent}{0pt}
\usepackage{myamsart}
\usepackage[opta]{optional}

\providecommand{\sinh}{\loglike{sinh}}
\providecommand{\cosh}{\loglike{cosh}}

\newtheorem{ex}{Example}
\title[Stability Analysis of Matrix W--H Factorisation of
  Daniele--Khrapkov]{Stability Analysis of Matrix Wiener--Hopf Factorisation of
  Daniele--Khrapkov Class and Reliable Approximate Factorisation}
\author{Anastasia V. Kisil}    
\address{Cambridge Centre of Analysis, University of Cambridge,
  Wilberforce Road, Cambridge, CB3 0WA, UK }
\email{a.kisil@maths.cam.ac.uk}
 
\begin{document}
 
\begin{abstract}

This paper presents new stability results for matrix Wiener--Hopf
factorisation. The first part of the paper examines  conditions for
stability of Wiener-Hopf factorisation in
 Daniele--Khrapkov class. 
The second part of the paper concerns the class of matrix functions
which can be exactly or approximately reduced to the
factorisation of the Daniele--Khrapkov matrices. The results
of the paper are demonstrated by numerical examples with partial indices
\(\{1,-1\}\), \(\{0,0\}\) and \(\{-1,-1\}\).
\end{abstract} 

\keywords{Wiener-Hopf,  Daniele--Khrapkov, Riemann-Hilbert, Rational Approximation}

\maketitle


\section{Introduction}

This paper examines the stability of Wiener--Hopf matrix
factorisation~\cites{Spit,Constructive_review,Spit_book} in a certain class
of matrices. In the essence, a factorisation of a scalar or matrix
function \(\mathbf{G}( t)\) is its decomposition into a product
\begin{equation}
  \label{eq:wiener-hopf-defn}
  \mathbf{G}(  t)=\mathbf{G}_+(t)\mathbf{G}_-(t)  
\end{equation}
with the invertible factors
\(\mathbf{G}_+(t)\) and \(\mathbf{G}_-(t)\) analytically extendable
into the upper/lower half-plane (Section~\ref{subsec:W-H}). We
consider the class of Daniele--Khrapkov \(2\times 2\)
matrices~\cites{daniele2014wiener,Khrap}, which have the form
\begin{equation}
  \label{eq:daniele-kharpkov-defn}
  \mathbf{K}(t)= \mathbf{I}+f(t) \mathbf{J}(t),  
\end{equation}
where \(f(t) \) is an arbitrary scalar function with algebraic
growth at infinity, and \(\mathbf{J}(t)\) is a polynomial matrix with
\[ \mathbf{J}^{2}(t)=\Delta^{2}(t) \mathbf{I},\]
where \(\Delta^{2}(t)\) is a polynomial in \(t\) and \( \mathbf{I}\)
is the \(2\times 2\) identity matrix. The  Daniele--Khrapkov matrices
can be factorised explicitly (Section~\ref{sec:error}).

Fundamentally, the scalar and the matrix Wiener--Hopf
factorisations~\eqref{eq:wiener-hopf-defn} are different: the former
has a constructive solution in terms of a Cauchy type integral and the
latter has no explicit solution in general. The existence of matrix
factorisation under general assumptions has been proved by Gohberg and
Krein~\cite{existance}. Nevertheless, up to date constructive matrix
factorisation remains a formidable challenge. Due to its complexity,
different classes of matrix functions have to be treated separately
(see the recent review article~\cite{Constructive_review}). The class
of Daniele--Khrapkov matrices is very important for applications and
arise naturally in a number of interesting problems in acoustic,
electromagnetics, etc., see for
example~\cites{Elastic_half,Ab_Kh_n,daniele2014wiener}.

The Wiener--Hopf factorisation~\eqref{eq:wiener-hopf-defn} is said to
be \emph{stable} if small changes in the matrix function \(\mathbf{G}(
t)\) lead to small changes in the factors \(\mathbf{G}_+(t)\) and
\(\mathbf{G}_-(t)\) (Section~\ref{sec:error}). Almost all
implementations of Wiener--Hopf technique are performed
numerically~\cite{history}, therefore  a careful analysis of stability is
essential. Among popular approximate techniques are truncated pole
removal~\cite{Nigel_poles} and rational
approximations~\cites{Nigel_cyl,Abraha_all_pade}. There are also new
asymptotic methods
~\cites{Mishuris-Rogosin,assym_2,Crighton_matching}, which also rely
on stability.  Even in the rare cases when explicit factorisations are
known, e.g. for Daniele--Khrapkov matrices, they still require numerical
computations of scalar factorisations. Those computations introduce
small errors, which can lead to large errors in the Wiener--Hopf
factors (Section~\ref{sec:Stability}).
 
A landmark theorem of Gohberg and Krein~\cite{Spit_book}*{\S~6.2}
gives general conditions for stability of matrix Wiener--Hopf
factorisation (Section~\ref{sec:Stability}). The difficulty of
applying these results is that the stability conditions depend on the
knowledge of Wiener--Hopf factorisation and hence are impractical to
check. The aim of this paper is to provide direct criteria for
stability of factorisation in a case of Daniele--Khrapkov
matrices. The conditions are demonstrated by numerical examples.

This work is a continuation of the author's paper~\cite{My1}, which
demonstrated a novel method of approximately solving scalar
Wiener--Hopf equations. In the scalar case the formula for the
solution in terms of a Cauchy type integral was used to bound the
error in the factors.  In this paper the previous results are extended
to the Daniele--Khrapkov matrices.

The first part of the paper establish stability of the
Daniele--Khrapkov class under perturbations within the class. There
are benefits to considering the `near' matrices  only within the
class.  It allows to answer the question if numerical
implementation of the factorisation is stable. This also allowed to 
obtain explicit error bounds.  The third advantage is that in a
specific case stronger results can be obtained then in the general
case.


The second part of the paper extends the class of matrix functions to
these which can be approximately reduced to  Daniele--Khrapkov
matrices. The class of matrix functions considered by Abrahams
in~\cite{Ab_noncom} is a special case of this construction. It is
shown that the stability results could be applied to this  meromorphic
factorisation. This is then used to show stability in a interesting 
example.

\opt{optx}{\section{Motivation}
\label{sec:Motivation}

This section provides motivation for the Wiener--Hopf factorisation,
the central concept in this paper.  The first subsection describes a
basic procedure to solve Wiener--Hopf equations. This highlights the use of
Wiener--Hopf factorisation and additive splitting as well as
demonstrating the necessity of algebraic behaviour at infinity. The
second subsection gives examples of an integral equation and
a differential equation which reduce to Wiener--Hopf equations. 

\subsection{The Wiener--Hopf Method}

The Wiener--Hopf problem is to find unknown functions \(\Phi_+\) and
\(\Psi_-\) which satisfy the following equation on the real line:
\[A( \alpha)\Phi_+(\alpha)+ \Psi_-(\alpha)+C(\alpha)=0.\] 
Functions \(A( \alpha)\) and \(C(\alpha)\) are given.
The subscript \(+\) (or \(-\)) indicates  that the
function is \emph{analytic  in the upper (or lower) half-plane}.

The key  step in the Wiener Hopf procedure is to find invertible
functions \(K_+\)
and \(K_-\) (\emph{the Wiener--Hopf factors}) such that:
\[A(\alpha)=K_+(\alpha)K_-(\alpha).\]
Then multiplying through we have:
\[K_+ \Phi_+ ( \alpha)+ K_-^{-1}  \Psi_-(\alpha)+ K_-^{-1}C(\alpha)=0.\]
Now splitting additively we obtain:
\[ K_-^{-1}C(\alpha)=C_-(\alpha)+C_+(\alpha).\]
Finally rearranging, the equation becomes:
\[K_+ \Phi_+ ( \alpha) +C_+( \alpha)=-K_-^{-1}
\Psi_-(\alpha)-C_-(\alpha).\]
The right hand side is a function that is analytic in the upper half-plane and the real line
and the left is analytic in the lower half-plane and the real line. So by analytic continuation 
obtain a function \(J(\alpha)\) that is analytic everywhere in the complex
plane. The requirement on the equation is that there exists an \(n\) such that (\emph{at most polynomial growth}):
\[J(\alpha)<|\alpha|^{n} \quad |\alpha| \to
\infty.\]
Applying the extended Liouville's theorem to \(J(\alpha)\) implies
that \(J(\alpha)\) is
polynomial of degree less than or equal to \(n\). So the solution is reduced
 to finding \(n\) unknown constants which can be
determined in some other way (typically \(n=0\) or \(1\)). Hence  \(\Phi_+ (
\alpha)\) and \(\Psi_-(\alpha)\) are determined.

\begin{rem}
In the matrix Wiener--Hopf the same procedure follows through. The
difficulty is contained in the finding the Wiener--Hopf factors.  
\end{rem}

\subsection{Applications}

Numerous applications lead to integral equations of the form 
\[ \int_0^{\infty} k(x-y)f(y) dy=g(x) \quad 0<x< \infty,\]
where \(k(x-y)\) is a given kernel and \(g(x)\) is also given (with
\(f(y)\) to be determined). Extend the domain of \(x\)
\[  \int_0^{\infty} k(x-y)f(y) dy=h(x) \quad -\infty<x< 0,\]
where \(h(x)\) is unknown. Then after applying the Fourier transform 
we obtain the Wiener--Hopf equation:
\[ F_+ (\alpha)K(\alpha)-G_+(\alpha)= H_-(\alpha).\]
This Wiener--Hopf equations can then be solved for \(F_+\) and mapped back by
taking inverse Fourier transform.

Another typical example of how the Wiener--Hopf equation appears is from a  PDE in acoustics.
Consider the boundary value problem  governed by the
reduced wave equation:
\[\left(\frac{\partial}{\partial x^2}+\frac{\partial}{\partial y^2}
+c^2 \right)
\phi (y, x) =0,  \quad \text{with} \quad c \in \mathbb{R},\]
and in the semi-infinite region \(-\infty
<x< \infty \) and \(y\ge 0\), with boundary conditions:
\begin{eqnarray}
  \label{eq:f}
  \phi &=& f(x) \quad \text{on} \quad y=0 \qquad 0 <x< \infty, \\
\frac{\partial  \phi}{\partial y} &=& g(x) \quad \text{on} \quad y=0 \quad -\infty <x< 0.
\end{eqnarray}
Applying the Fourier transform to the equation and the boundary
conditions yield the Wiener--Hopf equation see~\cite{bookWH} for more
details.
}
\section{Preliminaries}


Throughout the paper we are using the subscripts \(+\) and \(-\) to
denote functions which admit an analytic continuation into the the
upper and lower half-planes respectively. The \emph{Wiener algebra}
\(W(\mathbb{R})\) over the real line~\cite{Spit}*{Ex.~2.2} consists of
all complex valued functions \(f\) in \(\mathbb{R}\) that admit a
representation of the form 
\begin{displaymath}
  f(\lambda)=d+\int\limits_{-\infty}^{\infty} e^{i\lambda t}
  k(t)\,dt\, ,\quad \lambda\in\mathbb{R},
\end{displaymath}
for some  \(d\in\mathbb{C}\) and \(k\in L_1(\mathbb{R})\).

\subsection{Wiener--Hopf factorisation}
\label{subsec:W-H}

This subsection recalls the different types of Wiener--Hopf
factorisation, which have their own merits,
see~\cite{Spit} for a detailed exposition. 
Let \(\mathbf{G}( t)\) be in the matrix Wiener algebra \(W_{2 \times
  2}(\mathbb{R})\)~\cite{Spit_book}*{\S~5.2}. If \(\text{det }
\mathbf{G}( t) \neq 0\) for all real \(t\) then there exists the
full factorisation
\begin{equation}
\label{eq:RH}
  \mathbf{G}( t)=\mathbf{G}_+(t)\mathbf{D}(t)\mathbf{G}_-(t), \qquad t \in \mathbb{R},
\end{equation}
where factors and their inverses belong to the subalgebras of
analytically extendable functions to the respective half-planes
\begin{align*}
  \mathbf{G}_+^{\pm 1} &\in W_{2 \times 2}^+(\mathbb{R}), \quad \mathbf{G}_-^{\pm 1} \in W_{2 \times 2}^-(\mathbb{R}), \\
\text{ and } \mathbf{D}(t)&=\text{diag}\Big[\left(\frac{t-i}{t+i}\right)^{\kappa_1}, \left(\frac{t-i}{t+i}\right)^{\kappa_2}\Big].
\end{align*}
The integer exponents \(\kappa_1\) and \(\kappa_2\) are called
\emph{partial indices}. Unlike factorisation, the partial indices are
unique.  But in contrast to the scalar case, they cannot be determined
\emph{a priori} in general. 

A factorisation~\eqref{eq:wiener-hopf-defn} with the invertible
factors \(\mathbf{G}_+(t)\) and \(\mathbf{G}_-(t)\)
analytically extendable into the respective half-planes and
polynomilally bounded growth at infinity will be called
\emph{function-theoretic} factorisation.  The function-theoretic
factorisation is useful in applications since it retains most
information and is easier to find. 

\begin{rem}
  The partial indices are linked to the growth at infinity in
  function-theoretic factorisation, see~\cite{Mer}.
\end{rem}

It is also useful to consider a \emph{meromorphic} factorisation, where
the conditions are further relaxed to allow the presence of a finite
number of poles and zeroes in the
factors.

\opt{optx}{\subsection{Non-Uniqueness of Factorisation} 
\label{subsec:N-U}

This section details the degree of freedom when constructing
Wiener--Hopf factorisations.  In the scalar case, factors are unique up to
a constant. In other words if there are two such factorisations
\(K=K_+K_-\) and \(K=P_+P_-\) then:
\[K_+=cP_+ \quad \text{and} \quad K_-=c^{-1}P_-,\]
where \(c\) is some complex constant. This can be seen by
applying analytic continuation to \(K_+/P_+\) and \(P_-/K_-\) and then using
the extended Liouville theorem~\cite{Kranzer_68}.

The corresponding result in the matrix case is more complicated:

\begin{thm}~\cite{Spit}
\label{thm:Uni}
Let the matrix \(\mathbf{G}( t)\) admit the Wiener--Hopf factorisation (\ref{eq:RH}). Then: 
\[\mathbf{G}=\mathbf{G}_+^{1}\mathbf{D}\mathbf{G}_-^{1}, \]
  where
  \[\mathbf{G}_+^{1}=\mathbf{G}_+\mathbf{H}, \quad \mathbf{G}_-^{1}=\mathbf{D}^{-1}\mathbf{H}^{-1}\mathbf{D}\mathbf{G}_-,\]
  is also a factorisation of \(\mathbf{G}( t)\) where \(\mathbf{H}\) is a constant invertible 
  matrix function  if \(\kappa_1=\kappa_2\) and otherwise is of the form: 
  \[ \mathbf{H}(t)=\left(
 \begin{array}{cc}
  
c_1 &  P(z) \\
0  & c_2 
 \end{array} \right),\]
where \(P(z)\) is a polynomial of degree   \(\kappa_1-\kappa_2\) in \(z=\frac{t-i}{t+i}\).
\end{thm}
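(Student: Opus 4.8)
The plan is to prove the two halves of the statement separately: first that the prescribed transformation always produces a factorisation, and then, conversely, that every factorisation of \(\mathbf{G}\) with the diagonal \(\mathbf{D}\) arises in this way, which is what pins down the admissible form of \(\mathbf{H}\). Throughout I write \(z=\frac{t-i}{t+i}\), so that \(\mathbf{D}(t)=\mathrm{diag}[z^{\kappa_1},z^{\kappa_2}]\), and I order the indices so that \(\kappa_1\ge\kappa_2\) (this is implicit in the stated degree \(\kappa_1-\kappa_2\) of \(P\)).

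For the first (easy) direction I would observe that the product identity
\(\mathbf{G}_+^{1}\mathbf{D}\mathbf{G}_-^{1}=\mathbf{G}_+\mathbf{H}\mathbf{D}\mathbf{D}^{-1}\mathbf{H}^{-1}\mathbf{D}\mathbf{G}_-=\mathbf{G}_+\mathbf{D}\mathbf{G}_-=\mathbf{G}\)
holds for \emph{any} invertible \(\mathbf{H}\), so the content lies entirely in checking the subalgebra membership of the new factors. Since \(z\) is analytic in the upper half-plane and \(z^{-1}\) in the lower half-plane, an upper-triangular \(\mathbf{H}\) of the stated form with constant diagonal satisfies \(\mathbf{H}^{\pm1}\in W_{2\times2}^+(\mathbb{R})\), whence \((\mathbf{G}_+^{1})^{\pm1}\in W_{2\times2}^+(\mathbb{R})\). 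For the minus factor I would compute \(\mathbf{D}^{-1}\mathbf{H}^{-1}\mathbf{D}\) explicitly: it is again upper triangular with constant diagonal \(1/c_1,\,1/c_2\) and off-diagonal entry proportional to \(z^{\kappa_2-\kappa_1}P(z)\). Because \(\deg P\le\kappa_1-\kappa_2\), this entry is a polynomial in \(z^{-1}\), hence lies in \(W_{2\times2}^-(\mathbb{R})\) along with its inverse, giving \((\mathbf{G}_-^{1})^{\pm1}\in W_{2\times2}^-(\mathbb{R})\).

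For the second (characterising) direction I would start from a second factorisation \(\mathbf{G}=\mathbf{G}_+^{1}\mathbf{D}\mathbf{G}_-^{1}\) --- the diagonal \(\mathbf{D}\) is common because the partial indices are unique --- and set \(\mathbf{H}=\mathbf{G}_+^{-1}\mathbf{G}_+^{1}\), so that \(\mathbf{H}^{\pm1}\in W_{2\times2}^+(\mathbb{R})\). Comparing the two factorisations yields \(\mathbf{H}=\mathbf{D}\mathbf{M}\mathbf{D}^{-1}\) with \(\mathbf{M}=\mathbf{G}_-(\mathbf{G}_-^{1})^{-1}\) satisfying \(\mathbf{M}^{\pm1}\in W_{2\times2}^-(\mathbb{R})\). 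Reading off entries gives \(\mathbf{H}_{jk}=z^{\kappa_j-\kappa_k}\mathbf{M}_{jk}\), where \(\mathbf{H}_{jk}\) extends analytically and boundedly into the upper half-plane and \(\mathbf{M}_{jk}\) into the lower.

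The crux is then a Liouville-type argument applied entry by entry, exploiting that \(|z|=1\) on \(\mathbb{R}\). When \(\kappa_j=\kappa_k\) the relation \(\mathbf{H}_{jk}=\mathbf{M}_{jk}\) on \(\mathbb{R}\) glues the two analytic extensions into a bounded entire function, hence a constant. When \(\kappa_j>\kappa_k\) I would glue \(\mathbf{H}_{jk}\) on the upper half-plane to \(z^{\kappa_j-\kappa_k}\mathbf{M}_{jk}\) on the lower; the result is analytic off \(t=-i\), bounded at infinity, with at most a pole of order \(\kappa_j-\kappa_k\) at \(-i\), hence a rational function that rearranges into a polynomial in \(z\) of degree at most \(\kappa_j-\kappa_k\). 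When \(\kappa_j<\kappa_k\) the same construction forces a polynomial in \(z^{-1}\) that must simultaneously be analytic in the upper half-plane, which is possible only if the entry vanishes. With \(\kappa_1\ge\kappa_2\) this produces constant diagonal entries, a polynomial of degree \(\kappa_1-\kappa_2\) in the \((1,2)\) slot, and a zero in the \((2,1)\) slot --- exactly the stated upper-triangular \(\mathbf{H}\) --- collapsing to a constant invertible matrix when \(\kappa_1=\kappa_2\). The main obstacle I anticipate is the careful bookkeeping in this gluing step: tracking the admissible pole orders at \(\pm i\) and the growth at infinity precisely enough to apply the extended Liouville theorem and to rule out spurious powers of \(z\).
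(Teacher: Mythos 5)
The paper does not actually prove this theorem: it is quoted from the reference \cite{Spit}, and the paper only sketches the analogous scalar argument (analytic continuation of the ratio of factors plus the extended Liouville theorem). Your proof is correct and is precisely the natural matrix version of that argument, so there is nothing in the paper to contrast it with. Both directions are handled soundly: the forward direction reduces, as you say, to checking subalgebra membership, with the key observation that \(\mathbf{D}^{-1}\mathbf{H}^{-1}\mathbf{D}\) has off-diagonal entry proportional to \(z^{\kappa_2-\kappa_1}P(z)\), a polynomial in \(z^{-1}\) because \(\deg P\le\kappa_1-\kappa_2\). For the converse, your entry-wise gluing of \(\mathbf{H}=\mathbf{D}\mathbf{M}\mathbf{D}^{-1}\) with \(\mathbf{H}^{\pm1}\in W_{2\times2}^+\), \(\mathbf{M}^{\pm1}\in W_{2\times2}^-\) is the standard route, and you correctly track the only delicate points: in the case \(\kappa_j>\kappa_k\) the glued function is rational with sole pole at \(t=-i\) of order at most \(\kappa_j-\kappa_k\) and bounded at infinity, hence a polynomial in \(z\) of degree at most \(\kappa_j-\kappa_k\); in the case \(\kappa_j<\kappa_k\) the resulting \(c\,z^{-(\kappa_k-\kappa_j)}\) would have a pole at \(t=+i\) inside the upper half-plane, forcing \(c=0\) and hence the vanishing \((2,1)\) entry. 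Two cosmetic remarks: you should read the theorem's ``degree \(\kappa_1-\kappa_2\)'' as ``degree at most \(\kappa_1-\kappa_2\)'' (which is what you prove), and you may wish to note explicitly that \(c_1c_2=\det\mathbf{H}\neq0\) since \(\mathbf{H}=\mathbf{G}_+^{-1}\mathbf{G}_+^{1}\) is invertible, which gives the claimed invertibility of \(\mathbf{H}\).
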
	

}

\subsection{Scalar Error Estimates}
\label{subsec:SEE}

The \emph{index}  of a continuous non-zero
function \(K(t)\) on the real line 
is:
\begin{equation}
  \label{eq:eq:index-func-defn}
  \mathrm{ind}(K(t))=\frac{1}{2\pi}\big(
  \lim_{t\to+\infty}\arg  K(t)-\lim_{t\to-\infty}\arg
  K(t)\big).
\end{equation}
Note that
\(\mathrm{ind} \frac{t-i}{t+i}=1.\)
Thus, given a function \(K(t)\)  with index \(\kappa\) one can reduce it to zero index
by considering  \[K(t)\left( \frac{t-i}{t+1}\right) ^{-\kappa}.\] For the
rest of this subsection it will be assumed that all functions have
zero index. 

We also assume that \(K(t) \to 1\) for \(t \to \pm \infty\), then we
can normalise factors such that \(K_{\pm}(t) \to 1\) for \(t \to \pm \infty\).
A  non-zero H\"{o}lder continuous function \(K(t)\) on the real line with \(K(t)-1\)
in \(L_2(\mathbb{R})\)  possesses a
factorisation~\cite{Ga-Che}  \[K(t)=K_+(t)K_-(t),\] where \(K_{\pm}(t)\) are limiting
values of functions analytic and non-zero in the respective half-planes. 

The distinctive feature  of the scalar factorisation is the ability to
express the factors in terms of the Cauchy type integrals.
It is the existence of such expressions and the bounds in \(L_p\) on the Hilbert
transform which allowed to obtain some useful estimation~\cite{My1}.
We adapt them here for \(L_2\) case in the following form.

\begin{thm} [Additive Estimates in \(L_2\)]
\label{thm:add}
Let \(F(t)= F_+(t)+  F_-(t)\) and \(\tilde{F}(t)=
\bar{F}_+(t)+  \tilde{F}_-(t)\) with
\(\|F(t)-\tilde{F}(t) \|_2 < \epsilon\) then
\[\|F_{\pm}(t)-\tilde{F}_{\pm}(t) \|_2 \le \epsilon.\]
\end{thm}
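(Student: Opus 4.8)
The plan is to reduce the whole statement to the single fact that, in the Hilbert space $L_2(\mathbb{R})$, the additive splitting into a $+$-part and a $-$-part is carried out by a pair of mutually \emph{orthogonal} projections. First I would pass to the difference: set $G:=F-\tilde F$, and use linearity of the decomposition. Subtracting $F=F_++F_-$ from $\tilde F=\tilde F_++\tilde F_-$ gives $G=(F_+-\tilde F_+)+(F_--\tilde F_-)$, and the two summands $G_\pm:=F_\pm-\tilde F_\pm$ still extend analytically into the upper and lower half-planes respectively. Hence $G=G_++G_-$ is itself a splitting of the same type, and the claim becomes the statement that $\|G_\pm\|_2\le\|G\|_2$.

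Next I would identify the summands with Riesz (Cauchy--Plemelj) projections. Writing $H^2_+$ and $H^2_-$ for the Hardy subspaces of $L_2(\mathbb{R})$ consisting of boundary values of functions analytic in the respective half-planes, and $P_\pm$ for the projections onto them, one has $G_\pm=P_\pm G$. On the Fourier side, Plancherel's theorem together with the Paley--Wiener characterisation shows that $H^2_+$ consists of functions whose transform is supported on one half-line and $H^2_-$ of those supported on the complementary half-line; since these supports are disjoint, $H^2_+$ and $H^2_-$ are orthogonal complements in $L_2(\mathbb{R})$, so $P_\pm$ are \emph{orthogonal} projections (equivalently, the $P_\pm$ are expressible through the Hilbert transform, which by Plancherel is an isometry of $L_2(\mathbb{R})$). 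Note that in this $L_2$ setting the splitting is genuinely unique, with no additive-constant ambiguity, since the constants are not in $L_2(\mathbb{R})$.

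The estimate is then immediate from the Pythagorean identity for the orthogonal decomposition $G=G_++G_-$,
\[ \|G\|_2^2=\|G_+\|_2^2+\|G_-\|_2^2, \]
which yields $\|G_\pm\|_2\le\|G\|_2$. Combining this with the hypothesis $\|G\|_2=\|F-\tilde F\|_2<\epsilon$ gives $\|F_\pm-\tilde F_\pm\|_2=\|G_\pm\|_2\le\epsilon$, as required.

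The only genuine point to pin down, and the main obstacle, is the orthogonality step: one must justify that the additive splitting employed here really is the $H^2_+\oplus H^2_-$ decomposition (so that $G_\pm$ coincide with the projections $P_\pm G$), and that these two Hardy subspaces are mutually orthogonal. This is precisely where the hypothesis $F-\tilde F\in L_2(\mathbb{R})$ is essential, since it is the $L_2$ structure, via unitarity of the Fourier transform, that forces the projection norm to equal exactly $1$. In the $L_p$ case with $p\neq 2$ the same scheme applies, but the constant $1$ must be replaced by the $L_p$ operator norm of the Hilbert transform, which is the source of the $p$-dependent constant appearing in the earlier estimates of~\cite{My1}.
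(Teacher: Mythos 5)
Your proof is correct and is essentially the argument the paper relies on: the paper does not prove Theorem~\ref{thm:add} in the text but defers to~\cite{My1}, where the additive splitting is realised by Cauchy-type integrals (Riesz projections) and estimated via the boundedness of the Hilbert transform, and your Paley--Wiener/orthogonality argument is exactly the \(L_2\) specialisation of that scheme, in which the projection norm equals \(1\) and yields the constant-free bound \(\|F_{\pm}-\tilde{F}_{\pm}\|_2\le\epsilon\). You also correctly identify the one implicit hypothesis (that the splitting in question is the canonical Hardy-space one, so that \(F_{\pm}-\tilde{F}_{\pm}=P_{\pm}(F-\tilde{F})\)), which is precisely the convention under which the paper and~\cite{My1} operate.
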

\begin{thm}[Multiplicative Estimates in \(L_2\)]
\label{thm:mul}
Let  \(K(t)=K_{+}(t)K_{-}(t)\) and \(\tilde{K}(t)=\tilde{K}_{+}(t)\tilde{K}_{-}(t)\) be two
functions and \(m<|K|<M\). If
\(\|K(t)-\tilde{K}(t)\|_2< \epsilon\) then
\[\|K_{\pm}(t)-\tilde{K}_{\pm}(t)\|_{2} <
\frac{5(M+\epsilon)^{1/2}}{(m-\epsilon)} \epsilon.\] 
\end{thm}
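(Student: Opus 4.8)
The plan is to linearise the multiplicative problem by passing to logarithms, apply the additive estimate of Theorem~\ref{thm:add}, and then exponentiate back. Since \(K(t)\) and \(\tilde{K}(t)\) have zero index and are non-zero, and since the factors \(K_\pm\) and \(\tilde{K}_\pm\) are non-vanishing and analytic in their respective half-planes, the logarithms \(\log K_\pm\) and \(\log \tilde{K}_\pm\) are well defined and single-valued. This produces genuine \emph{additive} splittings
\[
\log K = \log K_+ + \log K_-, \qquad \log \tilde{K} = \log \tilde{K}_+ + \log \tilde{K}_-,
\]
which is the point at which the linear theory becomes available.

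First I would control \(\|\log K - \log \tilde{K}\|_2\) in terms of \(\epsilon\). Writing \(\log K - \log \tilde{K}\) and using the elementary bound \(|\log w - \log z| \le |w - z| / \min(|w|,|z|)\) along the segment joining the two values, together with the lower bounds \(|K| > m\) and \(|\tilde{K}| > m - \epsilon\), I obtain
\[
\|\log K - \log \tilde{K}\|_2 \le \frac{\|K - \tilde{K}\|_2}{m - \epsilon} < \frac{\epsilon}{m - \epsilon}.
\]
Theorem~\ref{thm:add} now applies directly to the additive data \(F = \log K\) and \(\tilde{F} = \log \tilde{K}\) (the half-plane projection being a contraction in \(L_2\)), yielding
\[
\|\log K_\pm - \log \tilde{K}_\pm\|_2 \le \frac{\epsilon}{m - \epsilon}.
\]

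Finally I would return to the factors themselves by exponentiating. With \(K_\pm = e^{\log K_\pm}\) and the mean value inequality \(|e^a - e^b| \le |a - b|\max(|e^a|,|e^b|)\), factoring out the sup-norm of the factors gives
\[
\|K_\pm - \tilde{K}_\pm\|_2 \le \big\| \max(|K_\pm|,|\tilde{K}_\pm|) \big\|_\infty \, \|\log K_\pm - \log \tilde{K}_\pm\|_2 .
\]
The constant \((M+\epsilon)^{1/2}\) and the numerical factor \(5\) are then recovered by bounding \(\|K_\pm\|_\infty, \|\tilde{K}_\pm\|_\infty \le (M+\epsilon)^{1/2}\) and keeping track of the elementary constants lost in the logarithmic and exponential inequalities.

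I expect the main obstacle to be this last step: controlling the sup-norm of the \emph{individual} factors by the square root of the sup-norm of the product. Although \(|K_+||K_-| = |K| < M\) holds pointwise, this alone does not bound either factor, so one must exploit the normalisation \(K_\pm \to 1\) at infinity together with the symmetry and H\"older regularity of the canonical factorisation to secure \(\|K_\pm\|_\infty \le (M+\epsilon)^{1/2}\). The related delicate point, namely justifying the pointwise lower bound \(|\tilde{K}| > m - \epsilon\) used in the logarithm step from the merely \(L_2\)-small perturbation \(K - \tilde{K}\), also has to be settled here.
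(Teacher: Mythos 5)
Your strategy --- linearise by taking logarithms, apply Theorem~\ref{thm:add} through the \(L_2\)-contractivity of the analytic projections, then exponentiate back --- is the same one underlying the result the paper actually leans on: the paper gives no proof of Theorem~\ref{thm:mul} at all, only the remark that it is a special case of the theorems of~\cite{My1} with more explicit constants, and those theorems are proved by precisely this logarithm--projection--exponentiation argument. So the skeleton is the intended one. The trouble is that the two points you defer to the end are not bookkeeping; they are the substance of the theorem, and your proposed fix for the first one cannot work as stated. The bound \(\|K_\pm\|_\infty \le (M+\epsilon)^{1/2}\) is false in general, not merely unproven: the pointwise identity \(|K_+||K_-|=|K|\) controls neither factor separately, and normalisation at infinity plus H\"older regularity cannot rescue it, because the analytic projection is unbounded on \(L_\infty\). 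Concretely, take \(K=e^{i\lambda\phi}\) with \(\phi\) real, smooth, compactly supported and \(\lambda\) large; then \(|K|\equiv 1\), the index is zero, \(K\to 1\) at infinity, yet \(\log K_+=\tfrac{1}{2}(i\lambda\phi-\lambda H\phi)\) (with \(H\) the Hilbert transform), so \(\sup|K_+|=e^{\lambda\sup(-H\phi)/2}\) is arbitrarily large while \(m\) and \(M\) stay pinned at \(1\). The inequality \(|K_\pm|\le|K|^{1/2}\) that your constant requires holds only under an extra hypothesis forcing \(|K_+|=|K_-|\) on \(\mathbb{R}\), for instance \(K\) real and positive, which gives \(K_-=\overline{K_+}\) on the line; some such assumption (implicit in the standing conventions of the subsection and in~\cite{My1}) must be invoked explicitly, and without it both your argument and the literal statement of the theorem fail on the example above.

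The second gap is the mismatch between the \(L_2\) hypothesis and the pointwise quantities you use. From \(\|K-\tilde K\|_2<\epsilon\) alone you cannot conclude \(|\tilde K|>m-\epsilon\) pointwise (an \(L_2\)-small perturbation can push \(\tilde K\) arbitrarily close to zero on a set of small measure), nor can you run the segment estimate for \(\log K-\log\tilde K\), which needs the segment joining \(K(t)\) and \(\tilde K(t)\) to stay away from the origin. The very form of the constants \((M+\epsilon)^{1/2}\) and \((m-\epsilon)\) shows the intended hypothesis is sup-norm closeness, as in~\cite{My1}; you should either assume \(|K(t)-\tilde K(t)|<\epsilon\) pointwise in addition to the \(L_2\) bound, or assume \(m-\epsilon<|\tilde K|<M+\epsilon\) outright. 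Once both hypotheses are in place your three steps do go through (your exponential inequality \(|e^a-e^b|\le|a-b|\max(|e^a|,|e^b|)\) is valid for complex arguments, and Theorem~\ref{thm:add} applies with constant \(1\) since the projections are orthogonal in \(L_2\)), and recovering the numerical factor \(5\) is then genuinely elementary; but as written, the proof has two holes exactly where the theorem's unstated hypotheses must enter.
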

The above results are special cases of theorems from~\cite{My1} with
some more explicit constants calculated.


\opt{optx}{\subsection{Constructive 
Factorisation of  Matrix Functions}
\label{subsec:class}

This section reviews the classes of matrix functions which have
constructive factorisation. By constructive factorisation it is meant
thet there is an algorithmic way to obtain the factorisation.
Constructive factorisation algorithms exist for classes of:

\begin{itemize}
\item Rational  matrix functions 
\item Functionally commuting matrix functions
\item Upper or lower triangular matrix function
\item Daniele--Khrapkov matrices
\end{itemize}

 What complicates the application of algorithms further is
that given a matrix it is difficulty in determining which class if any it
belongs to (pre and post multiplication can transform the matrix to a
right form but this is done using \emph{ad hoc} techniques). 

Rational matrix functions are a prime example where factorisation can
be achieved. This is because there are only poles and zeroes of
determinant which need to be considered i.e. meromorphic
factorisation is trivial. For a procedure see~\cite{Spit}. 

There has been some significant progress in the case of the matrix 
possessing a commutative factorisation~\cite{Spit_book}. One of
the reasons is that one could apply some of the
techniques that have been developed in the scalar
setting. Unfortunately, the class of matrices with a commutative
factorisation is not dense.
For some conditions under which the 
commutative factorisation is possible see~\cite{comm}. 

 Upper or lower triangular matrix functions (with factorable diagonal elements) can be reduced to a scalar
 equation and once this is solved this can be substituted to yield a
 second scalar equation. There are two systems one of which affects the behaviour of
 the other but not the other way round. So solving for the first
 system is a scalar problem and the effect of that system can be
 subtracted to leave the second system uncoupled. It is
 apparent why the general matrix Wiener--Hopf systems are so difficult
 to solve: there are two entangled systems which cannot be solved
 independently from each other.
 
 Daniele--Khrapkov matrices will be considered in detail in
 Section~\ref{sec:error}.
}
\opt{optx}{\subsection{Matrix Wiener--Hopf for Certain Class of Matrices}

 The first class of matrix presented here was considered by A.D. Rawling and W.E. Williams in~\cite{Raw}.
 It works for the matrices of the type:
 
 \[\mathbf{A} (\alpha) =\left(
 \begin{array}{cc}
  F(\gamma) &  G (\gamma)F(\gamma)  \\
 H(\gamma) & -G(\gamma)H(\gamma)
 \end{array} \right)\]

where \( F,G\) and $H$ are analytic functions (except possibly at
$\gamma=0$) functions of the variable $\gamma=(k^2- \alpha ^2)^{1/2}$ where $k$ is a constant with positive real and imaginary parts and the branch of the square root is chosen so that $\gamma=k$ when $\alpha=0$ with the branch cuts along the half-lines $ \alpha= k+ \delta$, $ \alpha= -k- \delta$ for $ \delta$ positive.

The way that this class of matrices is solved is by first transforming them to a matrix Riemann-Hilbert problem on a half line
and then it turns out that this system can be decoupled into scalar Riemann-Hilbert problem. Like the scalar Wiener--Hopf in the half line setting the resulting scalar Riemann-Hilbert can be solved explicitly.

The next class of matrices was considered by D. S. Jones in~\cite{Jones}:

\[\mathbf{C}  =\left(
 \begin{array}{cc}
  f &  ge_1  \\
 ge_2 & f- 2ge_3
 \end{array} \right)\]
 
 where $e_1$, $e_2$ and $e_3$ analytic function of complex variable $s$ but $f(s)$, $g(s)$ are analytic only in the strip.
 Then a commutative factorisation is found. Although it is not obvious but the class of matrices considered by 
A.D. Rawling and W.E. Williams can be reduced to Jones' class~\cite{Jones}. Then extensions to non-commutative factorisation are considered by pre-multiplying by  analytic matrices.
}

\section{Stability of Matrix Wiener--Hopf}
\label{sec:Stability}

For a sake of completeness we review here the most general results on
stability of matrix factorisation, since they are not widely known in
the Wiener--Hopf community.  The examples are adapted from a different
context of a Riemann-Hilbert problem on a circle.  There is a wealth
of different classes of factorisations considered by different
authors, for the purpose of clear exposition we consider here only
factorisation in Wiener algebra~\eqref{eq:RH}.

The simplest example of instability is obtained by mapping an example~\cite{Spit}
from the unit circle  to the real line. Consider a diagonal matrix
function with partial indices \{\(1\), \(-1\)\}
\begin{equation}
\label{eq:unstab}
\left(
 \begin{array}{cc}
  
\frac{t-i}{t+i} &  0 \\
0  & \frac{t+i}{t-i} 
 \end{array} \right)=\mathbf{I}
 \left(
 \begin{array}{cc}
  
\frac{t-i}{t+i} &  0 \\
0  & \frac{t+i}{t-i} 
 \end{array} \right)\mathbf{I}.
\end{equation}
Perturbing the matrix we have
\begin{equation}
\label{eq:unstab_1}
\left(
 \begin{array}{cc}
  
\frac{t-i}{t+i} &  0 \\
\epsilon  & \frac{t+i}{t-i} 
 \end{array} \right)=
\left(
 \begin{array}{cc}
  
1 &  \frac{t-i}{t+i} \\
0  & \epsilon 
 \end{array} \right)
 \mathbf{I}
\left(
 \begin{array}{cc}
  
0 &  -1/\epsilon \\
1  & \frac{t+i}{\epsilon(t-i)} 
 \end{array} \right).
\end{equation}
This example demonstrates that a small perturbations can not only
change the factors by an arbitrary amount but can also change the
partial indices (from \(\{1\), \(-1\}\) to \(\{0\), \(0\}\)). This is
significant because the partial indices are uniquely defined.  Note
that the sum of the partial indices remains the same. This is true in
general, which can be demonstrated if we equate the determinants of
both sides to reduced the problem to scalar factorisation. The partial
indices add to give the index~\eqref{eq:eq:index-func-defn} of the
determinant. In this case, the index of a function \(f\) is the
winding number of the curve \((\text{Re } f(t), \text{Im } f(t)) \),
\(t \in \mathbb{R}\). Hence, \(\mathrm{ind}(f)\) and, thus the sum of
partial indices, are stable under small perturbations.

\begin{rem}
  It is possible to use the non-uniqueness of
  factorisation~\cite{Spit} to obtain a different factorisation
  of~(\ref{eq:unstab})
\begin{equation}
\label{eq:unstab_2}
\left(
 \begin{array}{cc}
  \frac{t-i}{t+i} &  0 \\
0  & \frac{t+i}{t-i} 
 \end{array} \right)=
\left(
 \begin{array}{cc}
  1 &  \frac{t-i}{t+i} \\
0  & \epsilon 
 \end{array} \right) 
\left(
 \begin{array}{cc}
\frac{t-i}{t+i} &  0 \\
0  & \frac{t+i}{t-i} 
 \end{array} \right)
\left(
 \begin{array}{cc}
  1 & - \frac{t+i}{\epsilon(t-i)} \\
0  & 1/\epsilon
 \end{array} \right).
\end{equation}
This is more similar to~(\ref{eq:unstab}).
\end{rem}

The following surprising theorem provides the necessary and sufficient
conditions for the partial indecies to be invariant under sufficiently
small perturbations. 

\begin{thm}[Gohberg -- Krein, \cite{Spit_book}*{\S~6.2}]
The system \(\kappa_1 \ge \dots \ge \kappa_n\) of partial indices is stable if and only if
\[\kappa_1-\kappa_n\le 1.\]
\end{thm}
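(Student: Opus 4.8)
The plan is to characterise the whole system of partial indices through a family of integer invariants that are individually semicontinuous under perturbation, and then to use the hypothesis \(\kappa_1-\kappa_n\le1\) to pin those invariants down. Write \(r(t)=\frac{t-i}{t+i}\) and, for each \(s\in\mathbb{Z}\), consider the block Toeplitz operator \(T_{r^{-s}\mathbf{G}}\) attached to the shifted symbol. From the factorisation~(\ref{eq:RH}) the shifted symbol has partial indices \(\kappa_j-s\), and the standard relation between partial indices and Toeplitz kernels gives (up to the usual conventions) \(\dim\ker T_{r^{-s}\mathbf{G}}=\sum_j\max(s-\kappa_j,0)\) and \(\dim\operatorname{coker}T_{r^{-s}\mathbf{G}}=\sum_j\max(\kappa_j-s,0)\). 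First I would record two elementary facts: (a) taking the second difference in \(s\) of either quantity returns the multiplicity \(\#\{j:\kappa_j=s\}\), so the whole family determines the partial indices; and (b) for a Fredholm operator both \(\dim\ker\) and \(\dim\operatorname{coker}\) are upper semicontinuous, while their difference, the Fredholm index (here \(-\sum_j\kappa_j=-\mathrm{ind}\,\det\mathbf{G}\)), is locally constant.

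For sufficiency I assume \(\kappa_1-\kappa_n\le1\), so that every partial index lies in \(\{k,k+1\}\) with \(k=\kappa_n\). The two decisive vanishings are \(\dim\ker T_{r^{-k}\mathbf{G}}=0\) (all indices \(\ge k\)) and \(\dim\operatorname{coker}T_{r^{-(k+1)}\mathbf{G}}=0\) (all indices \(\le k+1\)). For any sufficiently close \(\tilde{\mathbf{G}}\), upper semicontinuity forces both quantities to vanish as well, so every perturbed index \(\tilde\kappa_j\) again lies in \(\{k,k+1\}\). Since \(\sum_j\tilde\kappa_j=-\mathrm{ind}\,\det\tilde{\mathbf{G}}=\sum_j\kappa_j\) is unchanged (the winding number of the nonvanishing scalar determinant being stable, as noted in the discussion above) and there are exactly \(n\) of them, the number equal to \(k+1\) is forced; hence the perturbed multiset coincides with the original one and the factorisation is stable.

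For necessity I argue contrapositively: if some pair satisfies \(\kappa_i-\kappa_j\ge2\), I produce an arbitrarily small perturbation that alters the indices. Working in the basis furnished by~(\ref{eq:RH}), set \(\tilde{\mathbf{G}}=\mathbf{G}_+(\mathbf{D}+\epsilon\mathbf{E})\mathbf{G}_-\), where \(\mathbf{E}\) carries the single entry \(\epsilon\) in the \((i,j)\) slot coupling these two indices, so that \(\|\tilde{\mathbf{G}}-\mathbf{G}\|\le\epsilon\,\|\mathbf{G}_+\|\,\|\mathbf{G}_-\|\to0\). A direct \(2\times2\) factorisation, entirely analogous to the worked example \(\{1,-1\}\mapsto\{0,0\}\) in~(\ref{eq:unstab_1}), shows that the perturbed block \(\left(\begin{smallmatrix}r^{\kappa_i}&0\\\epsilon&r^{\kappa_j}\end{smallmatrix}\right)\) acquires partial indices \(\{\kappa_i-1,\kappa_j+1\}\), a genuinely different multiset precisely because \(\kappa_i-\kappa_j\ge2\); thus the indices jump under an arbitrarily small perturbation, contradicting stability. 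The step I expect to be most delicate is the bookkeeping in the sufficiency half: upper semicontinuity only supplies inequalities for the individual dimensions, and the entire force of \(\kappa_1-\kappa_n\le1\) is that it confines the admissible perturbed multisets to a one-parameter family which the stable index then fixes uniquely — for a larger spread the same inequalities leave real room for the indices to move, which is exactly what the necessity construction exploits.
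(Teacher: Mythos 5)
The paper itself offers no proof of this theorem --- it is quoted from Gohberg--Krein with a citation --- so your argument has to be judged against the classical proof, and your sufficiency half is in substance exactly that proof, done correctly: the two vanishings \(\dim\ker T_{r^{-k}\mathbf{G}}=0\) and \(\dim\operatorname{coker}T_{r^{-(k+1)}\mathbf{G}}=0\), upper semicontinuity of kernel and cokernel dimensions at Fredholm operators, and local constancy of \(\mathrm{ind}\,\det\mathbf{G}\) together pin the perturbed multiset down. The convention issue you flag is real but harmless: with the standard Toeplitz operator \(P_+\mathbf{G}|_{H^2}\) your kernel formulas count the indices of the \emph{right} factorisation \(\mathbf{G}=\mathbf{G}_-\mathbf{D}\mathbf{G}_+\), whereas (\ref{eq:RH}) is a left factorisation; one fixes this by working with the transpose (or the appropriately defined operator), and nothing in the argument changes.

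The genuine gap is in your necessity half: the constant perturbation \(\epsilon\) in the off-diagonal slot does \emph{not} change the partial indices in general. It does so only when \(\kappa_j<0<\kappa_i\), which is precisely what makes the worked example \(\{1,-1\}\) in (\ref{eq:unstab_1}) special. Concretely, take \(\kappa_i=2\), \(\kappa_j=0\) (so \(\kappa_i-\kappa_j\ge 2\)); then
\begin{equation*}
\left(\begin{array}{cc} r^{2} & 0 \\ \epsilon & 1 \end{array}\right)
=\left(\begin{array}{cc} r^{2} & 0 \\ 0 & 1 \end{array}\right)
\left(\begin{array}{cc} 1 & 0 \\ \epsilon & 1 \end{array}\right),
\end{equation*}
and since the constant matrix is an invertible minus factor, the partial indices remain \(\{2,0\}\): your perturbation produces no jump at all. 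More generally, for \(\kappa_j\ge 0\) one absorbs \(\epsilon\) via the invertible lower-half-plane factor \(\left(\begin{array}{cc} 1 & 0 \\ \epsilon r^{-\kappa_j} & 1 \end{array}\right)\), and symmetrically for \(\kappa_i\le 0\) via a plus factor on the left. Your claimed outcome \(\{\kappa_i-1,\kappa_j+1\}\) is also wrong even when the indices do move: \(\left(\begin{array}{cc} r^{3} & 0 \\ \epsilon & r^{-3} \end{array}\right)\) has partial indices \(\{0,0\}\), not \(\{2,-2\}\). The standard repair is small: since \(\kappa_i-\kappa_j\ge 2\) there is an integer \(m\) with \(\kappa_j<m<\kappa_i\); perturb that slot by \(\epsilon\, r^{m}\) instead of \(\epsilon\). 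Then
\begin{equation*}
\left(\begin{array}{cc} r^{\kappa_i} & 0 \\ \epsilon r^{m} & r^{\kappa_j} \end{array}\right)
= r^{m}\left(\begin{array}{cc} r^{\kappa_i-m} & 0 \\ \epsilon & r^{\kappa_j-m} \end{array}\right),
\end{equation*}
the inner block now has indices straddling zero, the computation behind (\ref{eq:unstab_1}) applies, and the indices become \(\{m,\ \kappa_i+\kappa_j-m\}\), a genuinely different multiset; the perturbation is still \(O(\epsilon)\) in the Wiener norm, so with this replacement your necessity argument goes through.
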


In fact, this condition is also sufficient for the stability of factors
in the Wiener norm.

\begin{thm}[Shubin, \cite{Spit_book}*{\S~6.6}]
  Assume the matrix function \(\mathbf{G}\) has a Wiener--Hopf
  factorisation and the tuple of its partial indices is
  stable. Then, for every \(\epsilon>0\) there exists a \(\delta>0\)
  such that, for \(\|\mathbf{F}-\mathbf{G}\|<\delta\), the matrix
  function \(\mathbf{F}\) admits a factorisation in which
  \(\|\mathbf{F}_{\pm}-\mathbf{G}_{\pm}\|<\epsilon\).
\end{thm}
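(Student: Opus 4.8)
The plan is to linearise the factorisation map at the assumed factorisation of \(\mathbf{G}\) and to solve the resulting nonlinear equation by a contraction argument in the Wiener norm, with the stability hypothesis \(\kappa_1-\kappa_2\le 1\) entering at exactly one point: to guarantee that the linearised map is onto. First I would reduce to a perturbation of the diagonal middle factor. Writing the assumed factorisation as \(\mathbf{G}=\mathbf{G}_+\mathbf{D}\mathbf{G}_-\) and setting \(\mathbf{E}=\mathbf{G}_+^{-1}(\mathbf{F}-\mathbf{G})\mathbf{G}_-^{-1}\), the fixed invertibility of \(\mathbf{G}_\pm^{\pm1}\in W^\pm_{2\times2}(\mathbb{R})\) gives \(\|\mathbf{E}\|\le \|\mathbf{G}_+^{-1}\|\,\|\mathbf{G}_-^{-1}\|\,\delta=:C_0\delta\) and \(\mathbf{G}_+^{-1}\mathbf{F}\mathbf{G}_-^{-1}=\mathbf{D}+\mathbf{E}\). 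It therefore suffices, for \(\|\mathbf{E}\|\) small, to produce a factorisation \(\mathbf{D}+\mathbf{E}=(\mathbf{I}+\mathbf{A}_+)\mathbf{D}(\mathbf{I}+\mathbf{A}_-)\) with \(\mathbf{A}_\pm\in W^\pm_{2\times2}(\mathbb{R})\) and \(\|\mathbf{A}_\pm\|\) controlled by \(\|\mathbf{E}\|\); then \(\mathbf{F}_+=\mathbf{G}_+(\mathbf{I}+\mathbf{A}_+)\) and \(\mathbf{F}_-=(\mathbf{I}+\mathbf{A}_-)\mathbf{G}_-\) is a factorisation of \(\mathbf{F}\) with the \emph{same} middle factor \(\mathbf{D}\) (the indices are preserved, as Gohberg--Krein predicts), and \(\|\mathbf{F}_\pm-\mathbf{G}_\pm\|\le \max(\|\mathbf{G}_+\|,\|\mathbf{G}_-\|)\,\|\mathbf{A}_\pm\|\).

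The identity to be solved is equivalent to \(\mathbf{E}=\mathbf{A}_+\mathbf{D}+\mathbf{D}\mathbf{A}_-+\mathbf{A}_+\mathbf{D}\mathbf{A}_-\), whose linear part is \(L(\mathbf{A}_+,\mathbf{A}_-)=\mathbf{A}_+\mathbf{D}+\mathbf{D}\mathbf{A}_-\). After dividing out the scalar \(z^{\kappa_2}\mathbf{I}\), with \(z=(t-i)/(t+i)\), I may assume the indices lie in \(\{0,1\}\), so \(\mathbf{D}=\mathrm{diag}[z,1]\). Written entrywise, \(L\) decouples into the two diagonal additive-splitting problems \(E^{11}=z(A_+^{11}+A_-^{11})\), \(E^{22}=A_+^{22}+A_-^{22}\), and the two off-diagonal Riemann--Hilbert problems \(E^{21}=zA_+^{21}+A_-^{21}\), \(E^{12}=A_+^{12}+zA_-^{12}\). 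Each is handled by the additive splitting \(h=h_++h_-\) in the Wiener algebra, which here is a \emph{bounded} operation (the projections onto \(W^\pm\) act by restricting the \(L_1\)-density to a half-line and so have norm at most one), giving clean constants rather than the Hilbert-transform bounds behind Theorems~\ref{thm:add}--\ref{thm:mul}.

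The crux, and the step I expect to be the main obstacle, is the surjectivity of \(L\) together with a bounded right inverse. The diagonal problems and the block \(E^{21}=zA_+^{21}+A_-^{21}\) are solvable for every datum: for the latter one splits \(E^{21}=(E^{21})_++(E^{21})_-\) and transfers the single scalar \((E^{21})_+(i)\) into the minus part as a constant \(c\), so that \((E^{21})_+-c\) vanishes at the simple zero of \(z\) at \(t=i\) and \(A_+^{21}=((E^{21})_+-c)/z\) remains in \(W^+\). This is exactly where stability is used: were \(\kappa_1-\kappa_2\ge 2\), this factor would be \(z^{\kappa_1-\kappa_2}\) with a zero of order \(\ge 2\) at \(i\), imposing at least two vanishing conditions that one constant cannot satisfy, so \(L\) would fail to be onto — this is the mechanism behind the blow-up in~\eqref{eq:unstab}--\eqref{eq:unstab_1}. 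The block \(E^{12}=A_+^{12}+zA_-^{12}\) has non-negative index and is solvable unconditionally, its finite-dimensional affine solution set reflecting the non-uniqueness of matrix factorisation; fixing a normalisation (say \(\mathbf{A}_-(\infty)=0\) together with a definite choice in the \((1,2)\) block) selects a bounded right inverse \(R\) with \(\|R(\mathbf{E})\|\le C_1\|\mathbf{E}\|\).

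With \(R\) in hand the nonlinear equation becomes the fixed-point problem
\[\mathbf{A}=R(\mathbf{E})-R(\mathbf{A}_+\mathbf{D}\mathbf{A}_-),\qquad \mathbf{A}=(\mathbf{A}_+,\mathbf{A}_-),\]
whose right-hand side is a contraction on a ball of small radius \(\rho\) (the quadratic term has Lipschitz constant \(\le 2C_1\|\mathbf{D}\|\rho\)) provided \(C_1\|\mathbf{E}\|\le\rho/2\). The Banach fixed-point theorem then yields \(\mathbf{A}_\pm\) with \(\|\mathbf{A}_\pm\|\le 2C_1\|\mathbf{E}\|\le 2C_1C_0\,\delta\), and tracing back through the reduction gives \(\|\mathbf{F}_\pm-\mathbf{G}_\pm\|\le 2C_1C_0\max(\|\mathbf{G}_+\|,\|\mathbf{G}_-\|)\,\delta<\epsilon\) once \(\delta\) is small enough. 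Everything past the surjectivity of \(L\) is routine, so that step — the unconditional solvability of the off-diagonal Riemann--Hilbert problems, valid precisely when \(\kappa_1-\kappa_2\le 1\) — is the heart of the argument, and the same block decomposition carries over verbatim to the general \(n\times n\) case with indices confined to two consecutive integers.
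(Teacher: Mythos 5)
There is nothing to compare against here: the paper does not prove this statement at all. It is quoted as Shubin's theorem from \cite{Spit_book} and used as a black box (indeed, the paper's whole point is that checking its hypothesis is impractical, motivating the direct Daniele--Khrapkov criteria). Judged on its own, your argument is correct in all essentials and is, in substance, the standard proof of this stability result: reduce to a perturbation \(\mathbf{D}+\mathbf{E}\) of the middle factor, decouple the linearised equation over \(\mathbf{D}=\mathrm{diag}[z,1]\) into two additive splittings and two shifted splittings, solve the shifted block \(E^{21}=zA_+^{21}+A_-^{21}\) by spending the one free constant \(c=(E^{21})_+(i)\) on the single interpolation condition at \(t=i\), and close with a contraction. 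Your observation that for \(\kappa_1-\kappa_2\ge 2\) the linearised map fails to be onto (a zero of order \(\ge 2\) at \(i\) versus one constant) is exactly the mechanism visible in the paper's example \eqref{eq:unstab}--\eqref{eq:unstab_1}, where the perturbation is forced to change the partial indices. What your route buys, beyond what a citation gives, is an explicit linear-in-\(\delta\) bound with computable constants, which is very much in the spirit of what the paper itself does for the Daniele--Khrapkov class in Section~\ref{sec:error}.

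Three small points should be made explicit before this counts as a complete proof. First, a factorisation in the sense of \eqref{eq:RH} requires \((\mathbf{I}+\mathbf{A}_{\pm})^{-1}\in W_{2\times 2}^{\pm}(\mathbb{R})\); this is immediate from a Neumann series once \(\|\mathbf{A}_{\pm}\|<1\), but you never say it, and without it you have not produced Wiener--Hopf factors. Second, your right inverse uses the division \(h\mapsto h/z\) on \(\{h\in W^{+}:h(i)=0\}\), and your normalisation multiplies by \(z^{-\kappa_2}\); both are bounded operations because \(z^{\pm 1}\in W(\mathbb{R})\) and \(W(\mathbb{R})\) is a Banach algebra, and evaluation at \(i\) is a bounded functional on \(W^{+}\), but these facts, and the constant \(\|z^{-\kappa_2}\|\) that the normalisation injects into your final estimate, need to be recorded since the conclusion is quantitative. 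Third, your reduction ``so \(\mathbf{D}=\mathrm{diag}[z,1]\)'' silently assumes \(\kappa_1-\kappa_2=1\); the equal-index case \(\mathbf{D}=\mathbf{I}\), where all four blocks are plain additive splittings, should be stated as the (easier) second case.
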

An obstacle in using this result in applications is that one cannot in
general determine the partial indices without constructing the
factorisation.
The next section presents new conditions for stability of
factorisation for Daniele--Khrapkov matrices.












\section{Error estimates in Daniele--Khrapkov Matrices}
\label{sec:error}

This section examines function-theoretic factorisation of matrices of
Daniele--Khrap\-kov class~\eqref{eq:daniele-kharpkov-defn}. This class
was first considered by Khrapkov in connection to static stress fields
induced by notches in elastic wedges~\cite{Khrap}. There are other
numerous applications, e.g. related to wave
propagation~\cite{Application_DK,daniele2014wiener,Elastic_half}. 


Due to this special form~\eqref{eq:daniele-kharpkov-defn}
\(\mathbf{K}(t)\) can be re-expressed as
\[ \mathbf{K}(t)= r(t)  \big( \cosh[ \Delta (t)
\theta(t)] \mathbf{I} +\frac{1}{\Delta(t)} \sinh [ \Delta(t) \theta(t)]
\mathbf{J}(t) \big),\]
where:
\begin{equation}
  \label{eq:r-theta-defn}
  r(t)=\sqrt{1-\Delta^{2}(t)f^2(t)}, \qquad \theta(t)=\frac{1}{\Delta(t)} \ln\left( \frac{1+\Delta(t)f(t)}{1-\Delta(t)f(t)}\right).
\end{equation}
Multiplication of the above matrices is commutative, moreover
\[ \mathbf{K_1}(t) \mathbf{K_2}(t)= R(t)  \big( \cosh[ \Delta (t)
\Theta(t)] \mathbf{I}
+\frac{1}{\Delta(t)} \sinh [ \Delta(t) \Theta(t)]
\mathbf{J}(t) \big),\]
where:
\[R(t)=r_1(t)r_2(t), \quad \Theta(t)=\theta_1(t)+\theta_2(t).\]
This property is enough to obtain function-theoretical factorisation
\begin{equation}
\label{eq:D-K}
 \mathbf{K}_{\pm}(t)= r_{\pm} (t)  \big( \cosh[ \Delta (t)
\theta_{\pm}(t)] \mathbf{I} +\frac{1}{\Delta(t)} \sinh [ \Delta(t) \theta_{\pm}(t)]
\mathbf{J}(t) \big),
\end{equation}
where
\[r(t)=r_-(t)r_+(t), \quad \theta=\theta_-(t)+\theta_+(t).\] The
limitation is the degree of the polynomial \(\Delta^2\): if it is
greater than two then \( \cosh[ \Delta (t) \theta_{\pm}(t)]\) and
\(\sinh [ \Delta(t) \theta_{\pm}(t)]\) have exponential growth at
infinity~\cite{Ab_noncom}. This is an obstacle to the use of the
Wiener--Hopf technique.


We consider the question of stable factorisation for Daniele--Khrapkov
matrices in the following sense. Let \(K(t)\) and \(\tilde{K}(t)\) be
of Daniele--Khrapkov type and suppose \(\|K(t)-\tilde{K}(t)\|_2\) is
small. We provide an estimate on
\(\|K_{\pm}(t)-\tilde{K}_{\pm}(t)\|_2\). This splits into three
parts. The first part is to establish estimates for \(\|r(t)
-\tilde{r}(t)\|_2\) and \(\|\theta(t)-\tilde{\theta}(t)\|_2\) defined
by~\eqref{eq:r-theta-defn}. The second is to apply the error estimates
to parameters \(r_\pm(t)\) and \(\theta_\pm(t)\) of the
factors. Lastly \(\|K_{\pm}(t)-\tilde{K}_{\pm}(t)\|_2\) can be
examined.

Consider the matrix function \(\mathbf{K}(t)\) and its perturbation \(\tilde{\mathbf{K}}(t)\)
\begin{displaymath}
\mathbf{K}(t)= \mathbf{I}+f(t) \mathbf{J}(t),\qquad  
\tilde{\mathbf{K}}(t)= \mathbf{I}+\tilde{f}(t) \mathbf{J}(t),
\end{displaymath}
such that \(\|\Delta(t)f(t)-\Delta(t)\tilde{f}(t)\|_2<\epsilon\).  In this
setup the perturbation of \(r(t)\) can be estimated as follows.

\begin{lem}
\label{lem:mul}
Let \(r=\sqrt{1-\Delta^{2}(t)f^2(t)}\) and
\(\tilde{r}=\sqrt{1-\Delta^{2}(t)\tilde{f}^2(t)}\). Suppose that the
winding number of \( (1-\Delta^{2}(t)f^2(t))\) is zero, then
for \(\|\Delta(t)f(t)-\Delta(t)\tilde{f}(t)\|_2<\epsilon\) the
following estimate holds
\[\|r-\tilde{r}\|_2   <\frac{ N}{m} \epsilon,\]
where \(m=\min_{\mathbb{R}}  \{|r(t)|, |\tilde{r}(t)|\}>0\) 
and \( N=\max_{\mathbb{R}}\{|\Delta(t)f(t)|, |\Delta(t)\tilde{f}(t)|\} 
<\infty \).
\end{lem}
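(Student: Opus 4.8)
The plan is to rationalise the two square roots so that the difference $r-\tilde r$ acquires the controlled factor $\Delta f-\Delta\tilde f$ explicitly, and then to bound the remaining multiplier in $L_\infty$, so that the $L_2$ estimate drops out of H\"older's inequality. Writing $u=\Delta(t)f(t)$ and $\tilde u=\Delta(t)\tilde f(t)$, we have $r^2=1-u^2$ and $\tilde r^2=1-\tilde u^2$, and therefore
\[
r-\tilde r=\frac{r^2-\tilde r^2}{r+\tilde r}=\frac{\tilde u^2-u^2}{r+\tilde r}=\frac{(\tilde u+u)(\tilde u-u)}{r+\tilde r}.
\]
The value of this identity is that it isolates the small factor $\tilde u-u=\Delta\tilde f-\Delta f$, whose $L_2$ norm is assumed to be below $\epsilon$; the whole problem is reduced to a pointwise bound on the prefactor $(\tilde u+u)/(r+\tilde r)$.

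First I would control the numerator. By the very definition of $N$ we have $|u(t)|\le N$ and $|\tilde u(t)|\le N$ for every real $t$, and hence $|\tilde u(t)+u(t)|\le 2N$ pointwise. This step is routine.

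The crux is the lower bound on the denominator, namely the claim that $|r(t)+\tilde r(t)|\ge 2m$ for all real $t$, and this is precisely where the winding-number hypothesis enters. Because the winding number of $1-u^2$ vanishes, a single continuous (indeed Wiener-algebra) branch of $r=\sqrt{1-u^2}$ exists; for $\tilde u$ sufficiently close to $u$ the winding number of $1-\tilde u^2$ vanishes as well, by continuity of the winding number under small perturbations, and we take $\tilde r$ on the same branch, so that $r$ and $\tilde r$ reinforce one another rather than cancel. In the situation of interest $1-u^2$ and $1-\tilde u^2$ are positive on the real line, so that $r$ and $\tilde r$ are positive there, and then $r(t)\ge m$ together with $\tilde r(t)\ge m$ gives $r(t)+\tilde r(t)\ge 2m$ at once. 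I expect this denominator estimate to be the only genuinely delicate point: one must use the vanishing winding number to guarantee that the two roots are chosen on a common sheet, excluding the spurious branch $\tilde r=-r$ on which $r+\tilde r$ could be small.

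Combining the two bounds yields, pointwise on $\mathbb{R}$,
\[
|r(t)-\tilde r(t)|\le\frac{2N}{2m}\,|\tilde u(t)-u(t)|=\frac{N}{m}\,|\Delta\tilde f(t)-\Delta f(t)|,
\]
so that $\bigl\|(\tilde u+u)/(r+\tilde r)\bigr\|_\infty\le N/m$. Taking $L_2$ norms and invoking $\|\Delta f-\Delta\tilde f\|_2<\epsilon$ then gives
\[
\|r-\tilde r\|_2\le\frac{N}{m}\,\|\Delta f-\Delta\tilde f\|_2<\frac{N}{m}\,\epsilon,
\]
which is the asserted estimate; the strict inequality is inherited from the strict hypothesis on $\|\Delta f-\Delta\tilde f\|_2$. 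Everything beyond the denominator bound is the algebraic identity above and a single application of H\"older's inequality.
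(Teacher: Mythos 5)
Your proposal is correct and follows essentially the same route as the paper: the paper also rationalises via \(|\sqrt{a}-\sqrt{b}|=|a-b|/(\sqrt{a}+\sqrt{b})\) with \(a=1-\Delta^{2}f^{2}\), \(b=1-\Delta^{2}\tilde{f}^{2}\), bounds the denominator below by \(2m\) after invoking the winding-number hypothesis to fix a single-valued branch, bounds the factor \(\Delta(f+\tilde{f})\) pointwise by \(2N\), and then integrates. Your treatment of the branch choice for \(\tilde{r}\) is in fact slightly more explicit than the paper's, but the argument is the same.
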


\begin{rem}
The assumptions are natural since \(|r(t)|^2\)  is the determinant of
the matrix \(K\) which together with the determinant of its inverse is non-zero.  
\end{rem}

\begin{proof}
  Since winding number of \(
  (1-\Delta^{2}(t)f^2(t))\) is zero and \(\epsilon\) is small enough
  we have winding number of \( (1-\Delta^{2}(t)\tilde{f}^2(t))\) is
  also zero.  The square root for \(r(t)\) in~\eqref{eq:r-theta-defn}
  can be taken single valued.  In the inequality
  \begin{displaymath}
    |\sqrt{a}-\sqrt{b}|= \frac {|a-b|}{\sqrt{a}+\sqrt{b}}
    \leq   \frac {|a-b|}{2\min(\sqrt{a},\sqrt{b})}.
  \end{displaymath}
  we substitute \(a=1-\Delta^{2}(t)f^2(t)\) and
  \(b=1-\Delta^{2}(t)\tilde{f}^2(t)\). We also replace
  \(\min(\sqrt{a},\sqrt{b})\) by a smaller value \(m=\min_{\mathbb{R}}
  \{|r(t)|, |\tilde{r}(t)|\}>0\).  Integrating squares of the both
  sides over the real line we obtain
  \begin{eqnarray*}
    \|r-\tilde{r}\|_2&\le&
    \frac{1}{2m} \|\Delta^{2}(f^2-\tilde{f}^2)\|_2\\
    &\le&    \frac{1}{2m}
    \|(\Delta(f+\tilde{f}))(\Delta(f-\tilde{f}))\|_2\\
    &\le&     \frac{1}{2m}
    \left(\int_{\mathbb{R}} |\Delta(t)(f(t)+\tilde{f}(t))\Delta(t)(f(t)-\tilde{f}(t))|^2\,dt\right)^{1/2}\\
    &\le&    \frac{2N}{2m}
    \left(\int_{\mathbb{R}} |\Delta(t)(f(t)-\tilde{f}(t))|^2\,dt\right)^{1/2}\\
    &\le&\frac{N}{m} \epsilon,
  \end{eqnarray*}
  since \(|\Delta(t)f(t)|\) and \(|\Delta(t)\tilde{f}(t)|\) are bounded by  \(N=\max_{\mathbb{R}}\{|\Delta(t)f(t)|, |\Delta(t)\tilde{f}(t)|\} \).
\end{proof}

Similarly the behaviour of \(\theta\) under perturbation is important.

\begin{lem}
\label{lem:add}
Let
 \[\theta(t)=\frac{1}{\Delta(t)} \ln\left(
 \frac{1-\Delta(t)f(t)}{1+\Delta(t)f(t)}\right),\quad
 \tilde{\theta}(t)=\frac{1}{\Delta(t)} \ln\left(
 \frac{1-\Delta(t)\tilde{f}(t)}{1+\Delta(t)\tilde{f}(t)}\right). \]
 Suppose that the winding number of
 \(\big(\frac{1-\Delta(t)f(t)}{1+\Delta(t)f(t)} \big)\) is zero, then
 for small \(\|\Delta(t)f(t)-\Delta(t)\tilde{f}(t)\|_2<\epsilon\) the
 following estimate holds
 \[\|\theta -\tilde{\theta}\|_2 <\frac{2 \epsilon}{c d^2 L}, \] where
 \(d=\min_{\mathbb{R}}\{|1+\Delta(t)f(t)|,
 |1+\Delta(t)\tilde{f}(t)|\}>0\) and \(L=\max_{\mathbb{R}}
   \left|\frac{1-\Delta(t)f(t)}{1+\Delta(t)f(t)}\right|\) ,
   \(c=\min_{\mathbb{R}}|\Delta(t)| >0\).
\end{lem}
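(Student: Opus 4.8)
The plan is to mirror the structure of the proof of Lemma~\ref{lem:mul}: first reduce the $L_2$ statement to a pointwise Lipschitz-type bound, then carry out an explicit algebraic simplification, replace the pointwise extremal quantities by the uniform constants $c$, $d$, $L$, and only at the very end integrate over $\mathbb{R}$ and invoke the hypothesis $\|\Delta f-\Delta\tilde f\|_2<\epsilon$. Writing $u=\Delta f$ and $\tilde u=\Delta\tilde f$, the quantity to control pointwise is
\[ \theta-\tilde\theta=\frac{1}{\Delta}\bigl(\ln a-\ln\tilde a\bigr),\qquad a=\frac{1-u}{1+u},\quad \tilde a=\frac{1-\tilde u}{1+\tilde u}, \]
and the trivial bound $|1/\Delta|\le 1/c$ peels off the prefactor, legitimate because $c=\min_{\mathbb R}|\Delta|>0$. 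The remaining work is to bound the difference of two logarithms by a multiple of $|u-\tilde u|$.

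For this I would view $\theta$ as a function of the scalar argument $u$ and use the mean-value representation $\theta(u)-\theta(\tilde u)=(u-\tilde u)\int_0^1\theta'\bigl(\tilde u+\lambda(u-\tilde u)\bigr)\,d\lambda$ along the straight segment joining $\tilde u$ to $u$. A direct differentiation gives
\[ \theta'(u)=\frac{-2}{\Delta\,(1-u^2)}=\frac{-2}{\Delta}\cdot\frac{1}{a\,(1+u)^2}, \]
where the second equality uses $1-u=a(1+u)$. Taking moduli and bounding $|1+u|\ge d$ (the constant $d$), together with the control of $|a|=|(1-\Delta f)/(1+\Delta f)|$ encoded by $L$, yields the pointwise estimate $|\theta-\tilde\theta|\le \tfrac{2}{cd^2L}\,|u-\tilde u|$. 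Squaring, integrating over $\mathbb R$, and using $\|u-\tilde u\|_2=\|\Delta f-\Delta\tilde f\|_2<\epsilon$ then delivers the claimed $\|\theta-\tilde\theta\|_2<\tfrac{2\epsilon}{cd^2L}$. As in Lemma~\ref{lem:mul}, the smallness of $\epsilon$ is what allows replacing the pointwise minima and maxima along the segment by their global counterparts while keeping all constants finite and positive.

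I expect the genuine obstacle to be the handling of the complex logarithm rather than the algebra, which is routine once the identity for $a-\tilde a=-2(u-\tilde u)/[(1+u)(1+\tilde u)]$ (or equivalently the formula for $\theta'$) is in hand. One must first fix a single-valued branch of the logarithm: this is exactly what the hypothesis that the winding number of $(1-\Delta f)/(1+\Delta f)$ be zero provides, and, by the same continuity argument as in Lemma~\ref{lem:mul}, smallness of $\epsilon$ forces the perturbed argument to have the same (zero) winding number. The delicate point is then to guarantee that the entire segment joining $\tilde u$ to $u$ keeps the image $(1-s)/(1+s)$ away from the origin and off the branch cut, so that the fundamental-theorem-of-calculus representation of $\ln a-\ln\tilde a$ and the uniform bound on $|\theta'|$ are valid; without the winding-number assumption and the requirement that $\epsilon$ be small, the logarithm is multivalued and no pointwise estimate of this form can hold.
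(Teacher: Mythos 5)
Your overall strategy coincides with the paper's: use the winding-number hypothesis to fix single-valued branches of the logarithms, establish a pointwise mean-value (Lipschitz) bound for the difference of logarithms, replace the denominators by the uniform constants \(c\), \(d\), \(L\), and only at the end square and integrate against the hypothesis \(\|\Delta f-\Delta\tilde{f}\|_2<\epsilon\). Your identity \(a-\tilde{a}=-2(u-\tilde{u})/[(1+u)(1+\tilde{u})]\) is precisely the algebraic core of the paper's chain of inequalities, and your formula for \(\theta'\) is its differential form; the constants come out the same way. (Both you and the paper also quietly use \(L\) as a \emph{lower} bound on \(|(1-\Delta f)/(1+\Delta f)|\), although the statement defines it as a maximum; that defect is in the lemma itself, not in either argument.)

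There is, however, one concrete point where your execution departs from the paper's, and it opens a genuine gap. The paper applies the mean-value bound to \(\ln\) in the \(a\)-variable, \(|\ln a-\ln\tilde{a}|\le|a-\tilde{a}|/\min(|a|,|\tilde{a}|)\), and then estimates \(|a-\tilde{a}|\) by the exact identity above, so only values of \(1+\Delta f\), \(1+\Delta\tilde{f}\) and of the fractions \emph{at the real point \(t\) itself} ever enter --- exactly what \(d\) and \(L\) control. You instead differentiate the composite \(u\mapsto\ln\frac{1-u}{1+u}\) along the straight segment from \(\tilde{u}(t)\) to \(u(t)\), which forces you to bound \(|1+s|\) and \(|(1-s)/(1+s)|\) for \emph{all} \(s\) on that segment. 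The endpoint constants do not give this: a convex combination of two complex numbers of modulus \(\ge d\) can vanish. For instance, if at some \(t\) one has \(\tilde{u}(t)=-2\) and \(u(t)=0\), then \(|1+u|=|1+\tilde{u}|=1\) and the two fractions equal \(1\) and \(-3\), so all the stated constants are harmless, yet the segment passes through \(s=-1\), where \(\theta'\) blows up and your fundamental-theorem-of-calculus representation is meaningless. Your proposed fix --- that smallness of \(\epsilon\) keeps the segment under control --- cannot close this, because \(\epsilon\) bounds an \(L_2\) norm, not a sup norm: \(u-\tilde{u}\) can be of size \(2\) on a set of measure \(\epsilon^2/4\) while \(\|u-\tilde{u}\|_2<\epsilon\). (The residual delicacy about branches of the complex logarithm in the factor involving \(L\) is a separate issue, and it is present in the paper's write-up as well; but the paper's \(1/d^2\) factor is obtained by exact pointwise algebra, whereas in your version both \(1/d^2\) and \(1/L\) rest on the unjustified segment bounds.) The repair is to follow the paper's ordering: first use the exact identity for \(a-\tilde{a}\) to produce \(2|u-\tilde{u}|/d^2\), and only then apply the mean-value bound to the logarithm in the \(a\)-variable.
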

 
 \begin{rem}
   Since \(\Delta\) has no zeroes on the real line we can assume
   \(\text{min}|\Delta|\ge c>0\).  Also note that
   \(|1+\Delta(t)f(t)|\) and \(|1-\Delta(t)f(t)|\) are non-zero and
   finite respectively since they are multiples of det\(K\).
 \end{rem}
 
\begin{proof}
  From the assumption on zero winding number, the logarithms in the
  definition \(\theta(t)\) and \(\tilde{\theta}(t)\) are single valued
  functions.  The the mean value theorem applied to the logarithm
  function provides an inequality:
  \begin{displaymath}
    |\ln a- \ln b| \leq \frac{|a-b|}{\min(a,b)}.
  \end{displaymath}
  We substitute \(\ln a=\Delta (t)\theta(t)\), \(\ln b= \Delta
  (t)\tilde{\theta}(t) \) and replace \(\min(a,b)\) by \(L\) defined
  in the statement. Then, squaring both sides and integrating over the
  real line we obtain:
  \begin{eqnarray*}
    \|\theta -\tilde{\theta}\|_2 &\le &\frac{1}{cL}\left\|
    \frac{1-\Delta(t)f(t)}{1+\Delta(t)f(t)}-
    \frac{1-\Delta(t)\tilde{f}(t)}{1+\Delta(t)\tilde{f}(t)}\right\|_2\\
    &\le &\frac{2}{cL}\left\|\frac{\Delta(t)f(t)-\Delta(t)\tilde{f}(t)}{(1+\Delta(t)f(t))(1+\Delta(t)\tilde{f}(t))}\right\|_2\\
    &\le & \frac{2}{c d^2 L}\|\Delta(t)f(t)-\Delta(t)\tilde{f}(t)\|_2,
  \end{eqnarray*}
  where \(c\) and \(d\) are defined in the statement.
\end{proof}

Now we are in the position to apply the scalar error estimates.
Under the assumptions of the above Lemma~\ref{lem:add} and using the
additive error estimates Theorem~\ref{thm:add} we obtain:
\begin{equation}
  \label{eq:theta-estimation}
  \|\theta_{\pm} -\tilde{\theta}_{\pm}\|_2 <\frac{2}{c d^2 L} \epsilon. 
\end{equation}
Using Lemma~\ref{lem:mul}  and  the multiplicative error estimates
Theorem~\ref{thm:mul} it follows that
\begin{equation}
  \label{eq:r-estimation}
  \|r_{\pm} -\tilde{r}_{\pm} \|_2   <\frac{5 M N}{ m^{2}} \epsilon,
\end{equation}
where \(M=\max_{\mathbb{R}}  \{|r(t)|, |\tilde{r}(t)|\}>0\).

To simplify calculation in the next theorem, we will asssume that
\[\mathbf{J}=\left(
 \begin{array}{cc}
0 &  k_1 \\
 k_2 & 0 
\end{array} \right),\]
is a constant matrix.
Then, a sufficiently small
\(\|\Delta(t)f(t)-\Delta(t)\tilde{f}(t)\|_2\) guarantees that 
\(\|\mathbf{K}-\tilde{\mathbf{K}}\|_2\) is small as well.


 \begin{thm}
   Let \(\mathbf{K}\) and \(\tilde{\mathbf{K}}\) be of the above form,
   \(\|\Delta(t)f(t)-\Delta(t)\tilde{f}(t)\|_2<\epsilon\) and
   \(\Delta(t)=C\), satisfying the assumptions of Lemmas~\ref{lem:mul}
   and~\ref{lem:add} .  Then, the error
   \(\|\mathbf{K}_{\pm}-\tilde{\mathbf{K}}_{\pm}\|_2\) is a linear
   function of \(\epsilon\) and the exact estimates can be obtained
   using the above scalar estimates.
 \end{thm}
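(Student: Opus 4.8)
The plan is to reduce the matrix estimate to the two scalar estimates already obtained in~\eqref{eq:theta-estimation} and~\eqref{eq:r-estimation}. Since \(\Delta(t)=C\) and \(\mathbf{J}\) are constant, the factor~\eqref{eq:D-K} takes the form
\[
\mathbf{K}_{\pm}(t)=r_{\pm}(t)\cosh[C\theta_{\pm}(t)]\,\mathbf{I}
+\frac{r_{\pm}(t)}{C}\sinh[C\theta_{\pm}(t)]\,\mathbf{J},
\]
so that its four entries are linear combinations, with the constant coefficients \(1\), \(k_1/C\), \(k_2/C\), of the two scalar functions \(r_{\pm}\cosh(C\theta_{\pm})\) and \(r_{\pm}\sinh(C\theta_{\pm})\). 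Hence \(\|\mathbf{K}_{\pm}-\tilde{\mathbf{K}}_{\pm}\|_2\) is controlled, up to these fixed constants, by the two scalar \(L_2\) norms
\[
\|r_{\pm}\cosh(C\theta_{\pm})-\tilde{r}_{\pm}\cosh(C\tilde{\theta}_{\pm})\|_2
\quad\text{and}\quad
\|r_{\pm}\sinh(C\theta_{\pm})-\tilde{r}_{\pm}\sinh(C\tilde{\theta}_{\pm})\|_2 .
\]
It therefore suffices to estimate these two quantities.

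For each of them I would split the difference by adding and subtracting a mixed term, e.g.
\[
r_{\pm}\cosh(C\theta_{\pm})-\tilde{r}_{\pm}\cosh(C\tilde{\theta}_{\pm})
=(r_{\pm}-\tilde{r}_{\pm})\cosh(C\theta_{\pm})
+\tilde{r}_{\pm}\big(\cosh(C\theta_{\pm})-\cosh(C\tilde{\theta}_{\pm})\big).
\]
The first term is bounded in \(L_2\) by \(\|\cosh(C\theta_{\pm})\|_\infty\,\|r_{\pm}-\tilde{r}_{\pm}\|_2\), and the second by \(\|\tilde{r}_{\pm}\|_\infty\,\|\cosh(C\theta_{\pm})-\cosh(C\tilde{\theta}_{\pm})\|_2\). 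For the remaining hyperbolic difference I would apply the mean value theorem to \(\cosh\) (respectively \(\sinh\)), giving the pointwise bound
\[
|\cosh(C\theta_{\pm})-\cosh(C\tilde{\theta}_{\pm})|
\le C\,S_{\pm}\,|\theta_{\pm}-\tilde{\theta}_{\pm}|,
\]
where \(S_{\pm}\) bounds \(|\sinh|\) on the bounded range of \(C\theta_{\pm}\) and \(C\tilde{\theta}_{\pm}\). This converts the hyperbolic difference into \(\|\theta_{\pm}-\tilde{\theta}_{\pm}\|_2\). Collecting the pieces and inserting~\eqref{eq:r-estimation} for \(\|r_{\pm}-\tilde{r}_{\pm}\|_2\) and~\eqref{eq:theta-estimation} for \(\|\theta_{\pm}-\tilde{\theta}_{\pm}\|_2\) yields a bound of the shape \(\|\mathbf{K}_{\pm}-\tilde{\mathbf{K}}_{\pm}\|_2\le (\text{const})\,\epsilon\), with the constant assembled explicitly from \(m\), \(M\), \(N\), \(c\), \(d\), \(L\), the entries \(k_1,k_2\) and the various sup-norms; this is precisely the asserted linear dependence on \(\epsilon\).

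The main obstacle is justifying the finiteness of the sup-norms \(\|\cosh(C\theta_{\pm})\|_\infty\), \(\|\sinh(C\theta_{\pm})\|_\infty\), \(\|r_{\pm}\|_\infty\) and of the intermediate factor \(S_{\pm}\). These are finite precisely because the factors \(r_{\pm}\) and \(\theta_{\pm}\) lie in the Wiener subalgebras \(W^{\pm}\), hence are bounded and continuous on \(\mathbb{R}\) with limits at \(\pm\infty\); and crucially because \(\Delta(t)=C\) is constant, so the argument \(C\theta_{\pm}\) stays bounded and the hyperbolic functions do not grow. This is exactly the point where the restriction to constant \(\Delta\) (as opposed to the exponential blow-up that occurs when \(\deg\Delta^{2}>2\), noted after~\eqref{eq:D-K}) is essential. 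Once these sup-norm bounds are secured, the remainder is routine bookkeeping of the constants, and the explicit estimate follows directly from the scalar estimates above.
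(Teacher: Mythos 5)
Your proposal is correct and follows essentially the same route as the paper's own proof: the same add-and-subtract splitting of each entry via the triangle inequality, the same mean value theorem applied to \(\cosh\) (and \(\sinh\)), and the same final appeal to the scalar estimates~\eqref{eq:theta-estimation} and~\eqref{eq:r-estimation} together with sup-norm bounds on \(r_{\pm}\) and the hyperbolic factors. Your justification of those sup-norm bounds (Wiener-algebra membership and the constancy of \(\Delta\)) is in fact slightly more explicit than the paper's, which simply cites boundedness, zero winding number and convergence to a constant.
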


\begin{proof}
  Let \(a_{11}\) and \(\tilde{a}_{11}\) are the top-left elements of
  \(\mathbf{K}\) and \(\tilde{\mathbf{K}}\) respectively.  Then
\begin{eqnarray*}
\|a_{11}-\tilde{a}_{11}\|_2 &= &\| r_{\pm} (t)\cosh[ \Delta (t)
\theta_{\pm}(t)]-  \tilde{r}_{\pm} (t)\cosh[ \Delta (t)
\tilde{\theta}_{\pm}(t)] \|_2\\ &\le & \|r_{\pm} (\cosh[ \Delta (t)
\theta_{\pm}(t)]-\cosh[ \Delta (t)
\tilde{\theta}_{\pm}(t)])\|_2\\
&&+{}\|\cosh[ \Delta (t)
\tilde{\theta}_{\pm}(t)](r_{\pm}-\tilde{r}_{\pm} )\|_2,
\end{eqnarray*}
where the triangle inequality was used. Then, using the mean value
theorem for \(\cosh\) we obtain
\begin{eqnarray*}
\|a_{11}-\tilde{a}_{11}\|_2  &\le &|r_{\pm}| \text{ }|\sinh[ \Delta (t)
\theta_{\pm}(t)]|\text{ } \|\Delta (t)
\theta_{\pm}(t)- \Delta (t)
\tilde{\theta}_{\pm}(t)\|_2\\
&&+{}|\cosh[ \Delta (t)
\tilde{\theta}_{\pm}(t)]|\text{ } \|(r_{\pm}-\tilde{r}_{\pm} )\|_2.
\end{eqnarray*}
To complete the calculation it is enough to use the bound for \(|r_{\pm}|\),
\(|\sinh[ \Delta (t) \theta_{\pm}(t)]|\) and \(|\cosh[ \Delta
(t)\tilde{\theta}_{\pm}(t)]|\). This follows from \(r_{\pm}\) and
\(\theta_{\pm}\), being bounded, having zero winding number and
tending to a constant~\cite{My1}. The calculations for other entries
\(\|a_{ij}-\tilde{a}_{ij}\|_2\), \(i,j=1,2\) are performed analogously. All the
norms of \(2\times 2\) matrices are equivalent so it does not matter
which one is chosen.
\end{proof}

In the subsequent Sections we present several situations where our
results may be applied.  Numerical example will be presented in
Section~\ref{sec:numerical}.

\section{Approximate reducing to extended Daniele--Khrapkov}
\label{sec:reduce}



\subsection{Exact reduction to Daniele--Khrapkov Matrices}

The most general class of matrix functions which can be factored
using the above technique is
\begin{equation}
\label{eq:Reduce_1}
\mathbf{K} = \mathbf{S_{+}} \big(g_1 \mathbf{I} +g_2 \mathbf{J} \big)\mathbf{S_{-}}. 
\end{equation}
with \(\mathbf{S_{+}}\) and \(\mathbf{S_{-}}\) analytic in the
upper and lower half-plane respectively and \(\mathrm{tr}\mathbf{J}=0\).

This can be rearranged as
\begin{equation}
\label{eq:Reduce_2}
\mathbf{K} = g_1 \mathbf{S_1} +g_2 \mathbf{S_2},\qquad
\text{ where }\quad
\mathbf{S_1}=\mathbf{S_{+}}\mathbf{S_{-}} \text{ and }
 \mathbf{S_2}=\mathbf{S_{+}} \mathbf{J} \mathbf{S_{-}} . 
\end{equation}
The challenge is to work backwards from Equation~\eqref{eq:Reduce_2}
to~\eqref{eq:Reduce_1}.  The first step is the factorisation of
\(\mathbf{S_1}=\mathbf{S_{+}}\mathbf{S_{-}}\) and second step is to
ensure the second term satisfies the necessary conditions for
\(\mathbf{J}=\mathbf{S}_{+}^{-1} \mathbf{S}_2 \mathbf{S}_{-}^{-1}\).
To satisfy these considerations one can take \(\mathbf{S_1}\) and
\(\mathbf{S_2}\) to be rational, this class was studied
in~\cite{Speck_ind}.

Now we will outline the procedure to reduce Equation~\ref{eq:Reduce_2}
to~\ref{eq:Reduce_1}. Initially one must rule out the case when
\(\mathbf{S_1}\) has a zero on the real line. Since the matrix
\(\mathbf{K}\) does not have any zeros, any zeros of \(\mathbf{S}_1\)
must be compensated either by multiplying by \(f_1\) or by adding
\(f_2 \mathbf{S_2} \). So by constructing a different linear
combination it can be assumed that \(\mathbf{S_1}\) is non-zero on the
real line. Then using the rational factorisation
\(\mathbf{S_1}=\mathbf{S_{+}}\mathbf{S_{-}}\) we obtain
\[\mathbf{K} =\mathbf{S_{+}} \big(g_1 \mathbf{I}+g_2 \mathbf{R}\big)\mathbf{S_{-}} ,\]
with \(\mathbf{R}=\mathbf{S_{+}}^{-1} \mathbf{S_2} \mathbf{S_{-}} ^{-1}\).

This can it can be re-written as
\[\mathbf{K} =\mathbf{S_{1+}} \big(f_1 \mathbf{I}+f_2 \mathbf{J} \big)\mathbf{S_{1-}} ,\]
where \(\mathbf{J} =\mathbf{R}-1/2\text{tr}(\mathbf{R})\) for some new
functions \(f_1\) and \(f_2\), see~\cite{Speck_ind} for further
details. We will call such matrices \emph{extended} Daniele--Khrapkov class.

\subsection{Approximate reduction to Daniele--Khrapkov}

We give a description of a larger class of matrices which may
approximately factorised through approximation by matrix functions
from the extended Daniele--Khrapkov class~(\ref{eq:Reduce_2}). Those
matrices have the property that every entry of the matrix has
elements of the form:
\[f_1r_{ij}^1+f_2r_{ij}^2,\] with two fixed arbitrary functions
\(f_1\) and \(f_2\) and rational functions \(r_{ij}^1\) and
\(r_{ij}^2\). In the whole generality it shall be discussed
elsewhere. Here, we concentrate on a subclass, related to
work~\cite{Ab_noncom} with interesting
applications~\cite{Elastic_half}. This subclass allows to overcome the
problem of exponential growth of the factors in the Daniele--Khrapkov
matrices for high degree of polynomial \(\Delta(t)\). This approximate
procedure is simpler than the exact one provided by
Daniele~\cite{daniele2014wiener}*{\S~4.8.5}.

Let us begin with matrix
\[\mathbf{K}(t)= \mathbf{I}+f(t)  
 \left(
 \begin{array}{cc}
  
0 &  n(t) \\
 p(t) & 0 
 \end{array} \right).
\]
We can rearrange it into the form
\[\mathbf{K}(t)= \mathbf{I}+g(t)  \mathbf{J}(t), \]
 with:
\[\mathbf{J}(t)= \left(
 \begin{array}{cc}
  
0 &  \big(\frac{n(t)}{p(t)}\big)^{1/2} \\
\big(\frac{p(t)}{n(t)}\big)^{1/2}  & 0 
 \end{array} \right),
\]
and \(g(t)=f(t)\big(\frac{n(t)}{p(t)}\big)^{1/2}\).
The advantage of this rearrangement being,
\[\mathbf{J}^2(t)=\mathbf{I},\]
and the disadvantage is that now \(\mathbf{J}\) has branch cut
singularities. To overcome that Abrahams proposed to rationally
approximate \(\big(\frac{p(t)}{n(t)}\big)^{1/2}\) by \(r_N(t)\)
giving
\[\mathbf{J}_N(t)= \left(\begin{array}{cc}
  0 &  1/r_N(t) \\
r_N(t)  & 0 
 \end{array} \right).
\]
This procedure is exact when \(n(t)\) and \(p(t)\) have perfect
squares as factors.

The approximate matrix can be decomposed as in~(\ref{eq:D-K})
\[\mathbf{K}_N(t)=\mathbf{I}+g(t)
\mathbf{J}_N(t)=\mathbf{Q}_{N-}\mathbf{Q}_{N+}, \]
but the factors \(\mathbf{Q}_{N\pm}\) have poles. Hence, a meromorphic
factorisation is obtained. 
\begin{rem}
  Error bounds~\eqref{eq:theta-estimation} and~\eqref{eq:r-estimation}
  on \(\theta_{\pm}\) and \(r_{\pm}\) still hold in this meromorphic
  factorisation.
\end{rem}
To remove poles we can consider the factorisation
\begin{equation}
  \label{eq:M-defn}
  \mathbf{K}_N(t)=(\mathbf{Q}_{N-}\mathbf{M})(\mathbf{M^{-1}}\mathbf{Q}_{N+}),
\end{equation}
where \(\mathbf{M}\) is a rational matrix, which is chosen such that
the resulting factorisation has no poles in the required half-planes,
see~\cite{Ab_noncom} for further details. We are turning to illustrations
of this method.

\begin{ex}
  This example is concerned with the earlier example of
  instability~(\ref{eq:unstab}). The aim is to show that although the
  indices are \(1\) and \(-1\) it is still possible to have a stable
  perturbation. The construction is based on the results from the
  previous sections.
\begin{eqnarray*}
\label{eq:stab}
\left(\begin{array}{cc}
  \frac{t-i}{t+i} &  \epsilon f(t) \\
c \epsilon f(t) & \frac{t+i}{t-i} 
 \end{array} \right)&=&
 \left(\begin{array}{cc}
  \frac{t-i}{t+i} & 0  \\
 0 & \frac{t+i}{t-i} 
 \end{array} \right)+ \epsilon f(t) 
 \left(\begin{array}{cc}
 0 &  1 \\
c   & 0 
 \end{array} \right),\\
&=& 
\left(\begin{array}{cc}
  t+i &  0 \\
0  & \frac{1}{t+i} 
 \end{array} \right)
\mathbf{K}
\left(\begin{array}{cc}
  \frac{1}{t-i} &  0 \\
0  & t-i
 \end{array} \right).
\end{eqnarray*}
with
\[\mathbf{K}= \mathbf{I} + \epsilon f(t) 
\left(\begin{array}{cc}
  0 & (t-i)^{-1}(t+i)^{-1}  \\
c (t-i)(t+i)  & 0
\end{array} \right). \]
The matrix \(\mathbf{K}\) is of Abrahams type with the ratio of the off-diagonal
elements being a
square. Hence, there is no need for rational approximation and the 
procedure is exact in this case. One can construct the factors using the
(\ref{eq:D-K}). Lemmas~\ref{lem:mul} and~\ref{lem:add} can be applied
when \(f\) satisfies their assumptions. Hence, a meromorphic factorisation has been obtained which is
stable for small \(\epsilon\).
Then, the final step is to construct a matrix \(\mathbf{M}\) as in~\eqref{eq:M-defn}. In the case
when  \(f(t)\equiv k\)  the matrix \(\mathbf{M}\) takes the form
\begin{equation}
\label{eq:unstab_3}
\mathbf{M}=
\left(
 \begin{array}{cc}  
1+\frac{i/2}{t-i}+\frac{i/2k}{t+i} & \frac{i/2}{t-i}+\frac{i/2k}{t+i}  \\
1  & 1 
 \end{array} \right),
 \end{equation}
with det \(\mathbf{M}=1\). This completes the factorisation of the
perturbed matrix.
\end{ex}

\section{Numerical Results}
\label{sec:numerical}
This section presents two approximate scalar factorisations with
different indices and these are used to construct two approximate
Daniele--Khrapkov factorisations. 

\subsection{Rational approximation}

Rational approximation of functions has its uses in Wiener--Hopf
factorisation. One example was mentioned in previous section.
Paper~\cite{My1} applyies rational approximation to simplify the
scalar factorisation and avoid calculations of a Cauchy type
integral. 




 

Rational approximation is useful for Daniele--Khrapkov factorisation
because once the approximations for \(K_1\) and \(K_2\) are obtained
algebraic expressions such as
\begin{displaymath}
  K_1+c,\qquad K_1+K_2,\qquad K_1K_2,
\end{displaymath}
can be factored easily. This is not true in general as can be
seen from the next two examples.
\begin{figure}[htbp]

 \includegraphics[scale=0.8,angle=0]{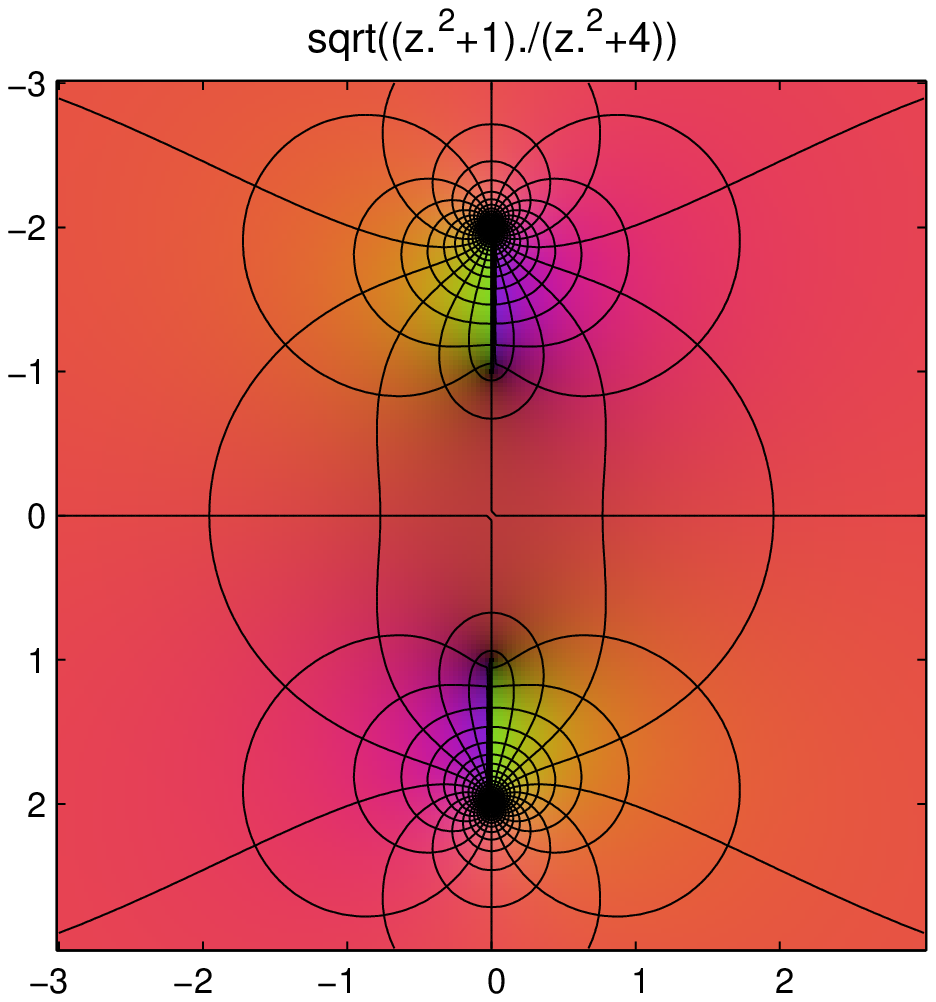} 
\includegraphics[scale=0.8,angle=0]{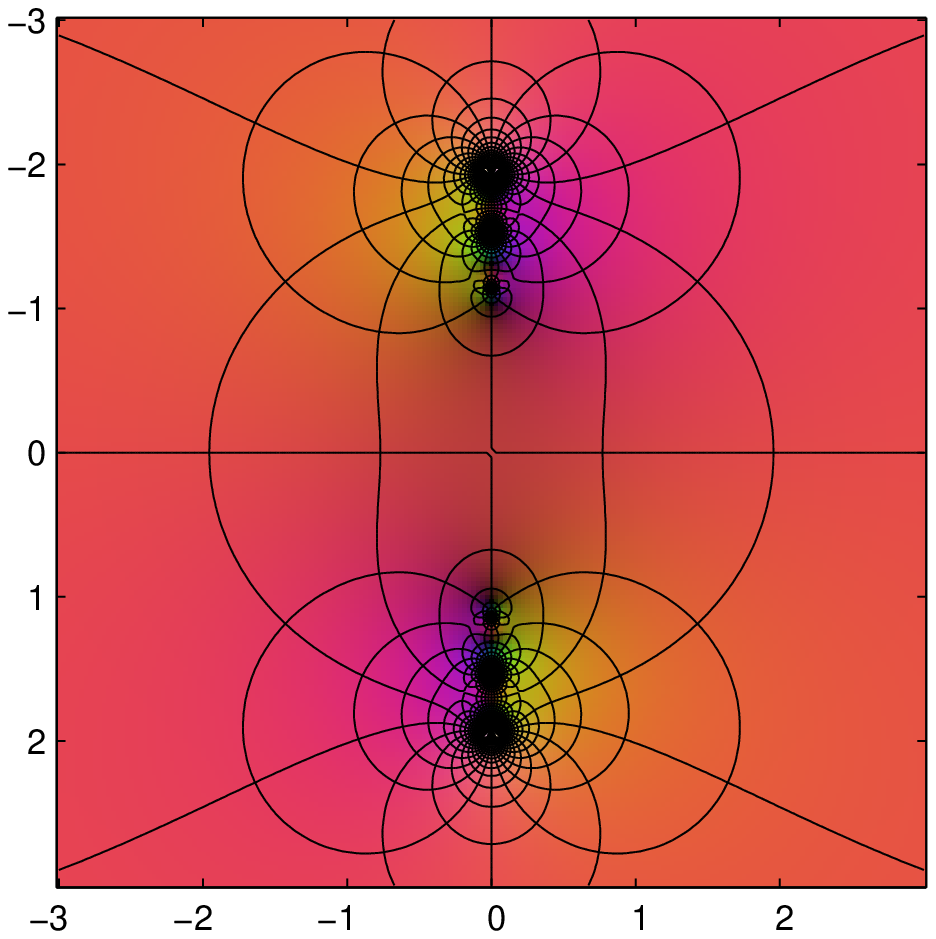}
\caption{Contour lines for the real and imaginary parts of function
  $F$~\eqref{eq:F-second-ex} and its rational approximation
  \(\tilde{F}\). They  are superimposed on a full colour image
  using a colour scheme developed by John Richardson. {\color{red}
    Red} is real, {\color{blue} blue} is positive imaginary,
  {\color{green} green} is negative imaginary, black is small
  magnitude and white is large magnitude.  Branch cuts appear as
  colour discontinuities and coalescent contour lines. Produced using MATLAB package \texttt{zviz.m}. }
 \label{fig:vis} 
\end{figure}
\begin{ex}
  Consider the function with zero index
  \begin{equation}
    \label{eq:F-second-ex}
   F(t)=\sqrt{    \frac{t^2+1}{t^2+k^2}}, 
  \end{equation}
 and with finite branch cuts from \(i\)
  to \(ki\) and from \(-i\) to \(-ki\).  This function is closely
  associated with the matrix function factorisation from problems in
  acoustics and elasticity, see~\cite{Pade}.  The factors can easily
  be seen by inspection
  \[F_{\pm}(t)=\sqrt{ \frac{(t\pm i)}{(t\pm ik)}}, \qquad
  F_{+}(t)=F_-(-t).\] However, the factorisation of \(F(t)+1\) cannot
  be achieved by inspection.  Rational approximation of \(\sqrt{
    \frac{t^2+1}{t^2+4}}\) had been also extensively studied
  in~\cite{My1}.  The approximation was achieved by constructing an
  appropriate transformation from the whole real line to the unit
  interval. As a result, an approximate factorisation has a small
  global error (\(10^{-12}\) on the real line).  Here, we produce
  Figure~\ref{fig:vis}, which demonstrates the closedness of
  approximation on the whole complex plane.
\end{ex}

\begin{ex}
Let us consider rational approximation of the function 
\begin{equation}
  \label{eq:K-third-ex}
  K=\sqrt{     \frac{(t+2i)(t+3i)}{(t-2i)(t-3i)}}
\end{equation}
with the index \(-1\). Again, the function has been chosen to have the
explicit exact factorisation
\[\sqrt{ \frac{(t+2i)(t+ki)}{(t-2i)(t-ki)}}=
\frac{\sqrt{(t+2i)(t+ki)}}{t+i}\left(\frac{t-i}{t+i}\right)^{-1}
\frac{t-i}{\sqrt{(t-2i)(t-ki)}}.\] The function-theoretic
factorisation has growth at infinity, making it more difficult to
approximate. Nevertheless, it can be rationally approximated and the
error \(|K-\tilde{K}|\) is presented in Figure~\ref{fig:err}.
Importantly, the error of the factors \(|K_{\pm}-\tilde{K}_{\pm}|\) is
also small (Figure~\ref{fig:err_fac}).  For more details on rational approximation of complex
valued functions see~\cite{Tref_old}.
\end{ex}

\begin{figure}[htbp]
\centering
\includegraphics[scale=0.55,angle=0]{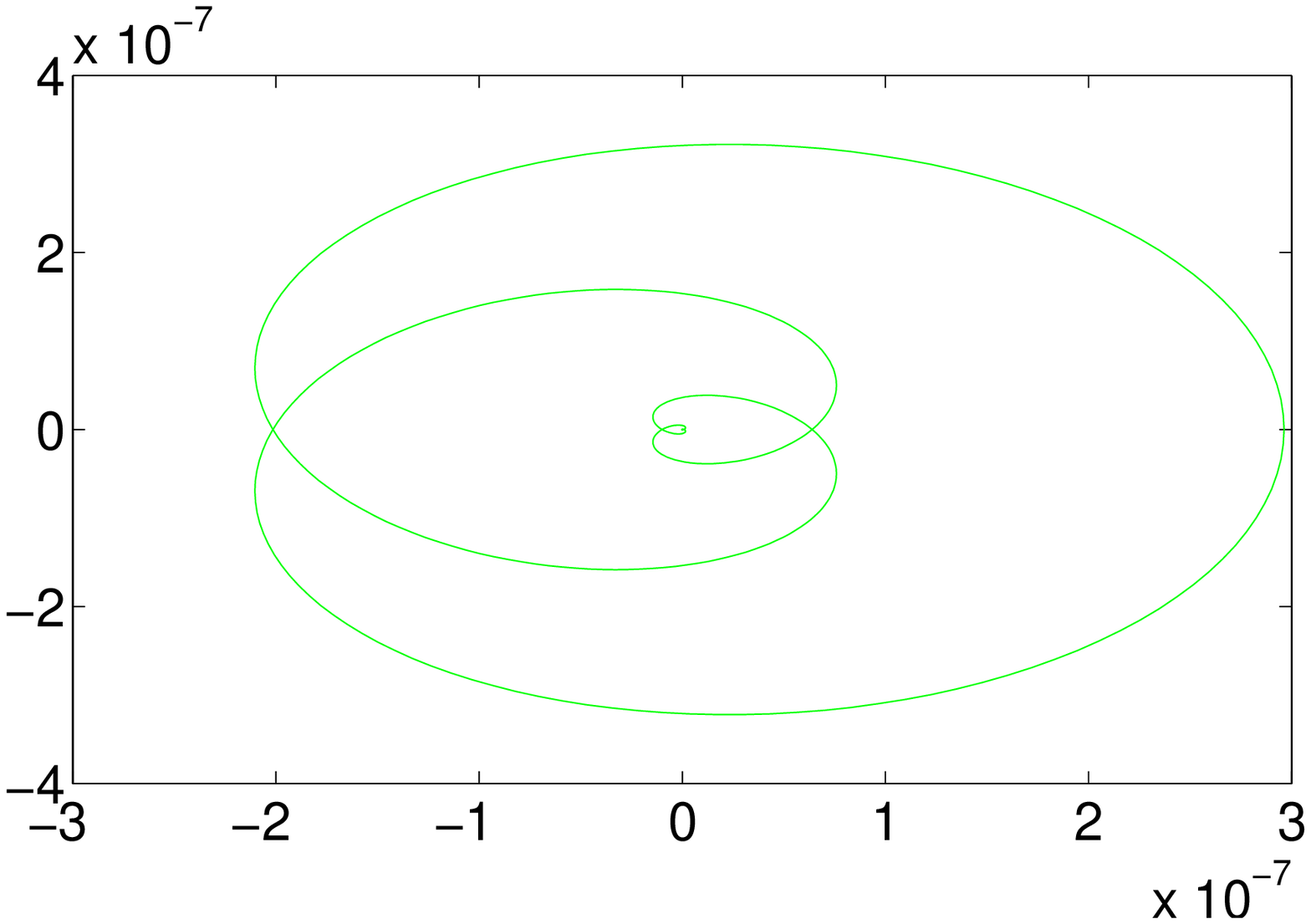}
\caption{Error in approximating function $K$~\eqref{eq:K-third-ex} by
  \([8,8]\) plotted as real against imaginary part. The accuracy of an
  approximation is denoted by the size of the disc the curve is
  contained in.}
\label{fig:err}
\end{figure}

\begin{figure}[htbp]
 \centering
  \includegraphics[scale=0.55,angle=0]{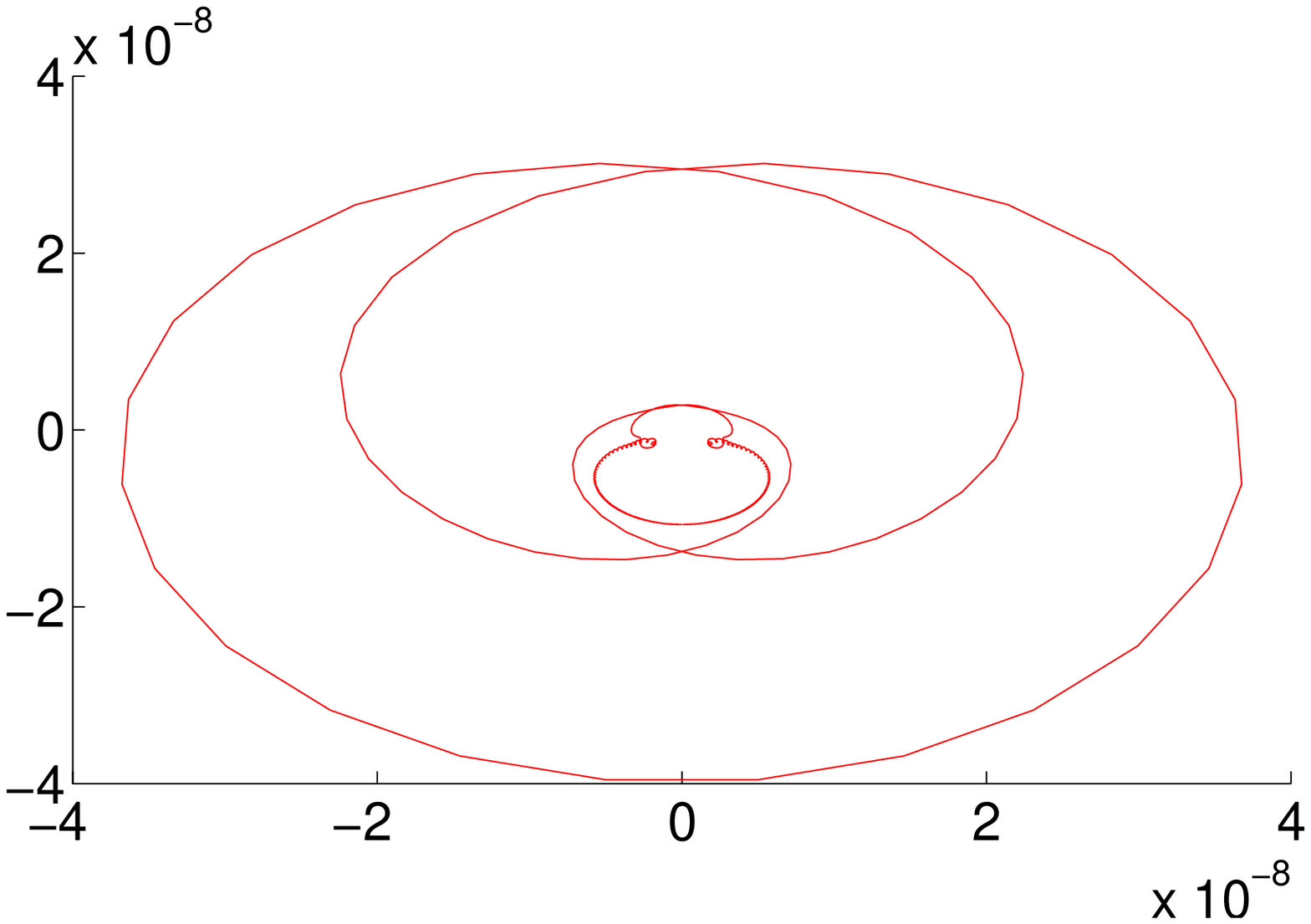}
  \caption{Error of factor \(K_\pm\) on the real line plotted as real against
    imaginary part. The accuracy of an approximation is denoted by the
    size of the disc the curve is contained in.}
  \label{fig:setup}
\label{fig:err_fac}
\end{figure}

\subsection{Numerical  Matrix Factorisation}
The stability result from Section~\ref{sec:error} can be used in
numerical computations. Two different examples are presented. For each
example the Daniele--Khrapkov factorisation is computed in two
different ways. The first method is the direct use of a Cauchy
integral to calculate the scalar factorisation of \(r_{\pm}\) and
splitting \(\theta_{\pm}\). So the initial matrix is exact, the
factors have errors due to computation of Cauchy integrals.  In the
second method the entries of the matrix are rationally approximated
and for this matrix the exact Daniele--Khrapkov factorisation is
obtained. The matrix is approximate but the factorisation of this
matrix is exact. The first method will be referred as ``exact'' and
the second one as ``approximate'' although the reader should note that both are approximate factorisations. The results of these
two methods are then compared for each example.

The first example is
\[\mathbf{K}_1(t)= 
\mathbf{I}+\sqrt{ \frac{t^2+1}{t^2+4}}   \left(\begin{array}{cc}  
0 &  1 \\
-2  & 0 
 \end{array} \right).\]
The ideas is to rationally approximate \(\sqrt{
  \frac{t^2+1}{t^2+4}}\) by \(f_N\). Then the factorisation of:
 \[
 \left(\begin{array}{cc}  
1 &  f_N \\
cf_N  & 1
\end{array} \right),\]
is computed and compared with the ``exact'' factorisation. The
advantage of such an approximation is 
that there is no need to use the Cauchy formula to find \(r_{\pm}\)
and \(\theta_{\pm}\). Note that the 
approximate matrix has all rational entries and hence in theory
factorisation can be achieved using methods for rational matrix
functions. But in practice the implemented procedures are unstable,
making it impossible. At present, very few implemented algorithms
Wiener--Hopf exist.  For
 example, there has been some attempts recently~\cite{Ann_rat} to
 produce numerical factorisation algorithms for rational matrix functions and
 numerical algorithms for Riemann-Hilbert problems~\cites{Olver_steep, Fokas_Painleve,MyStrip}.

The second example is
\[\mathbf{K}_2(t)= \mathbf{I}+\sqrt{ \frac{(t+2i)(t+i)}{(t-2i)(t-i)}} \left( \begin{array}{cc}  
0 &  1 \\
-2  & 0 
 \end{array} \right).\]
Similarly the approximate factorisation is considered by approximating
\(\sqrt{ \frac{(t+2i)(t+i)}{(t-2i)(t-i)}}\).

\begin{figure}[htbp]
  \centering
 \includegraphics[scale=0.31,angle=0]{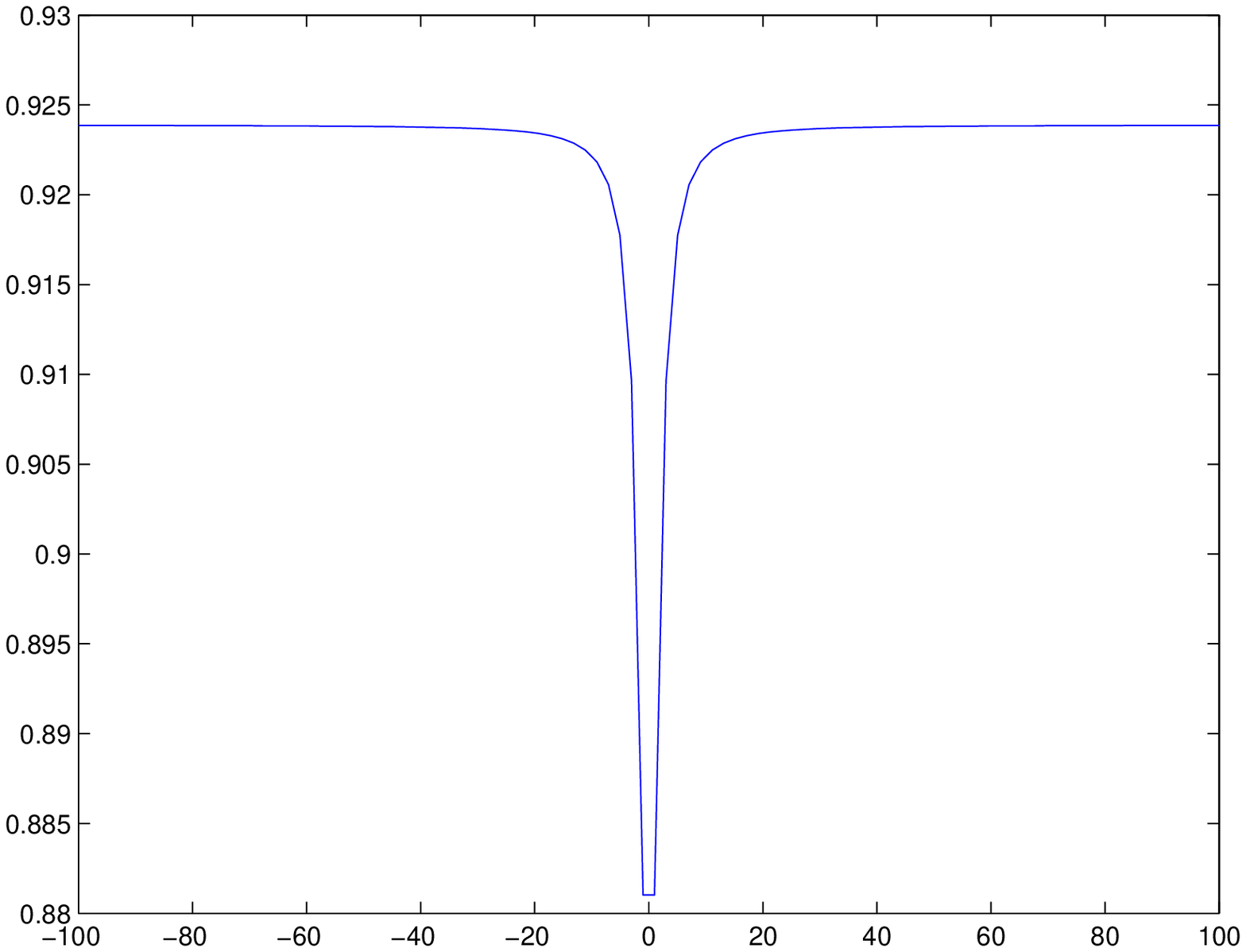}\hfill
 \includegraphics[scale=0.31,angle=0]{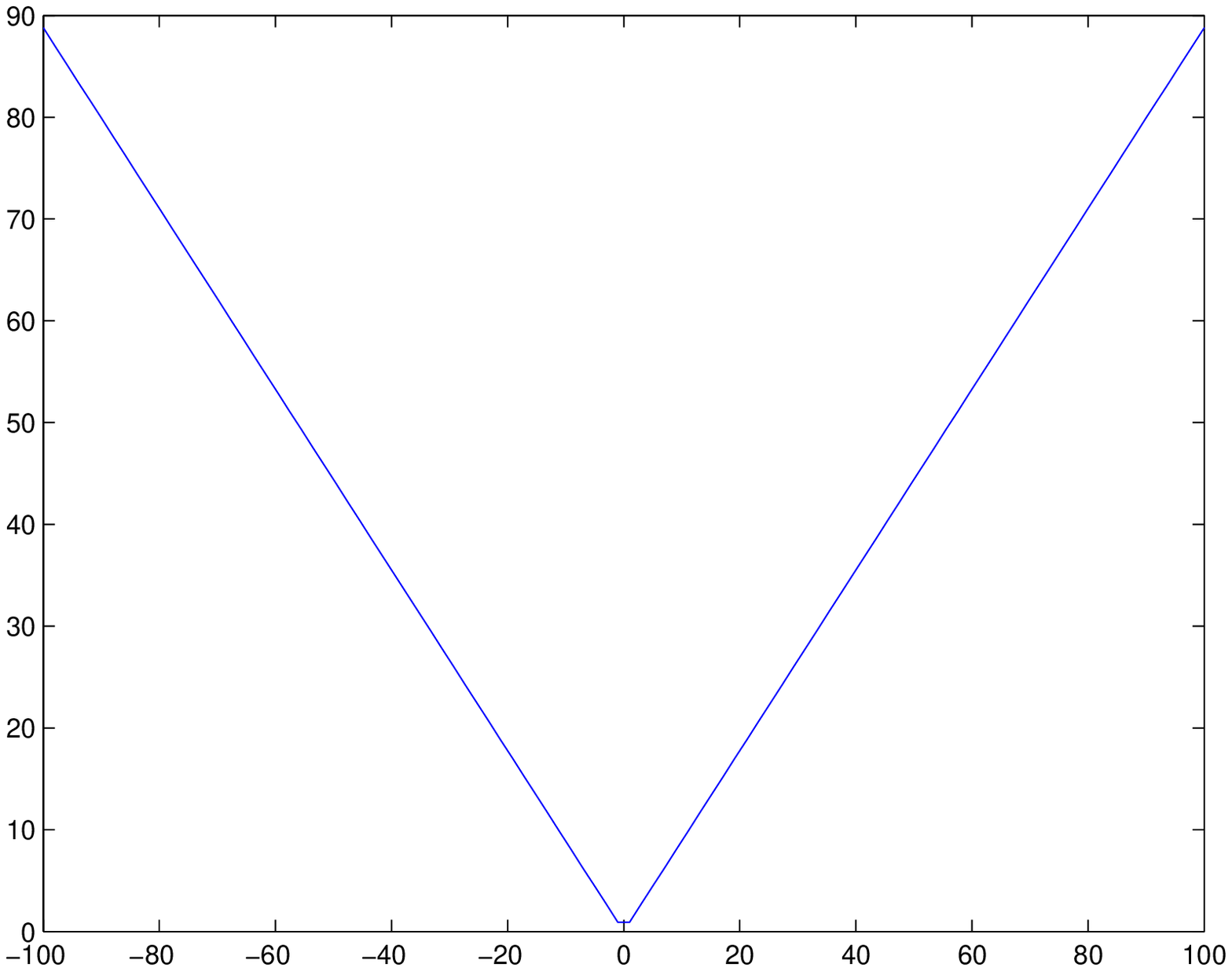}
  \caption{The modulus of \(K_{1+}\) and \(K_{2+}\) on the real line.}
\label{fig:beh}
  \end{figure}

  The difference in behaviour on the real line of the two examples can
  be seen in Figure~\ref{fig:beh}. This is because their
  partial indices are different.  The first example have partial indices
  \(\{0\), \(0\}\) and the second \(\{-1\), \(-1\}\). These partial
  indices can be computed using the following identity
 \[
 \left(\begin{array}{cc}  
1 &  f \\
cf  & 1
 \end{array} \right)=
 \left(\begin{array}{cc}  
1 &  1\\
c^{1/2}  & -c^{1/2}
 \end{array} \right)
  \left(\begin{array}{cc}  
1+c^{1/2}f &  0\\
0 & 1-c^{1/2}f
 \end{array} \right)
  \left(\begin{array}{cc}  
\frac{1}{2} &  \frac{c^{-1/2}}{2}\\
\frac{1}{2}  & \frac{-c^{-1/2}}{2}
 \end{array} \right).
 \]
 The errors are compared in Figure~\ref{fig:ex_1} and
 Figure~\ref{fig:ex_2}. It should be noted that the calculation of
 ``approximate'' factors took significantly less computational time
 than the ``exact'' factors.  Besides the natural difference in
 magnitude of errors (due to the difference in errors of rational
 approximations) the shape of the curves are dramatically
 different. It seems the error in Figure~\ref{fig:ex_1} is random and
 in Figure~\ref{fig:ex_2} is systemic. This suggests that in the first
 example the error in ``exact'' factorisation is greater than
 ``approximate'' factorisation. So the accumulated errors in computing
 Cauchy integrals is greater than the error in once approximating
 entries of the matrix function. The reverse is true in the second
 example.

\begin{figure}[htbp]
  \centering
 \includegraphics[scale=0.6,angle=0]{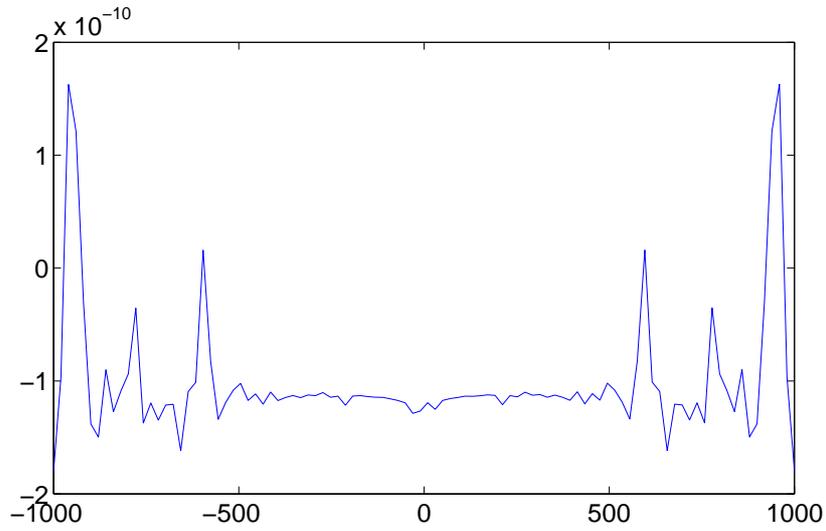}\hfill 
 \caption{The modulus of the difference in $a_{11}$ elements of ``exact'' and  ``approximate''  factors for $K_1$.}
 \label{fig:ex_1} 
\end{figure}

\begin{figure}[htbp] 
  \centering
 \includegraphics[scale=0.6,angle=0]{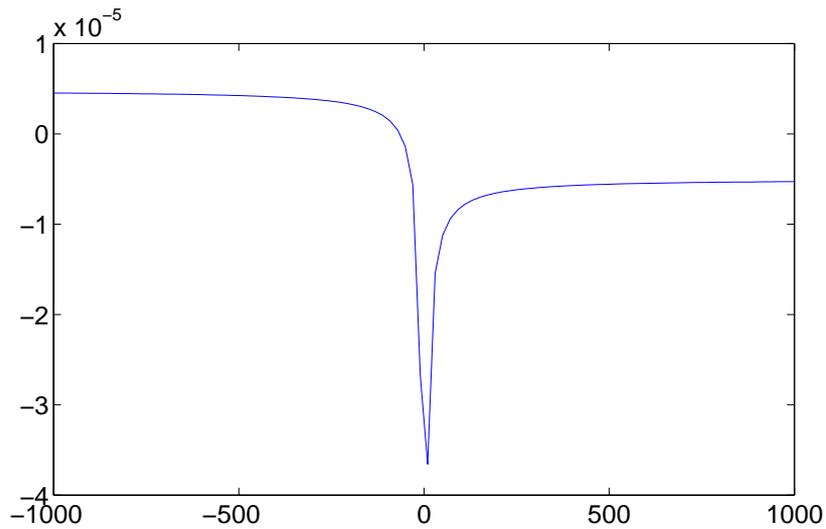}
 \caption{The modulus of the difference in $a_{11}$ elements of ``exact'' and  ``approximate'' factors for $K_2$.}
\label{fig:ex_2} 
\end{figure}




\section{Acknowledgements}

I am grateful for support from Prof. Nigel Peake. I benefited from
useful discussions with Dr Rogosin, Prof. Speck and Prof. Spitkovsky. Suggestions
 of the anonymous referees helped to improve this paper. This work was supported by the UK Engineering and Physical Sciences Research Council (EPSRC) grant EP/H023348/1 for the University of Cambridge Centre for Doctoral Training, the Cambridge Centre for Analysis.


\bibliography{newgeometry}

\end{document}